\newtheorem{theorem}{Theorem}[section]
\newtheorem{lemma}[theorem]{Lemma}
\newtheorem{proposition}[theorem]{Proposition}
\theoremstyle{definition} \newtheorem{definition}[theorem]{Definition}
\theoremstyle{remark} \newtheorem{remark}[theorem]{Remark}
\newcommand{\Q}{\mathbb{Q}} 
\newcommand{\Z}{\mathbb{Z}}
\newcommand{\Ib}{\mathbf{I}}
\newcommand{\A}{\mathcal{A}}
\newcommand{\B}{\mathcal{B}}
\newcommand{\Ccl}{\mathcal{C}_l}
\newcommand{\Nc}{\mathcal{N}}
\newcommand{\Pc}{\mathcal{P}}
\newcommand{\R}{\mathcal{R}}
\newcommand{\Sc}{\mathcal{S}}
\newcommand{\Uq}{\mathcal{U}_q(\mathfrak{gl}(1|1))}
\newcommand{\Uqtwo}{\mathcal{U}_q(\mathfrak{sl}(2))}
\newcommand{\HFK}{\widehat{HFK}}
\newcommand{\HF}{\widehat{HF}}
\DeclareMathOperator{\id}{id}
\DeclareMathOperator{\Rest}{Rest}
\DeclareMathOperator{\Induct}{Induct}
\title[Khovanov-Seidel quiver algebras and Ozsv{\'a}th-Szab{\'o}'s bordered theory]{Khovanov-Seidel quiver algebras and Ozsv{\'a}th-Szab{\'o}'s bordered theory}
\author{Andrew Manion}
\begin{document}

\begin{abstract} We investigate a relationship between Ozsv{\'a}th and Szab{\'o}'s bordered theory \cite{OSzNew} and the algebras and bimodules constructed by Khovanov and Seidel in \cite{KSQuiver}. Specifically we show that (a variant of) a special case of Ozsv{\'a}th-Szab{\'o}'s algebras has a quotient which is isomorphic to the Khovanov-Seidel quiver algebra with coefficients in $\Z/2\Z$. Furthermore, we show that after induction and restriction of scalars, the dg bimodule over quiver algebras associated to a crossing by Khovanov-Seidel is homotopy equivalent to Ozsv{\'a}th-Szab{\'o}'s DA bimodule for the crossing in this special case.
\end{abstract}

\maketitle

\section{Introduction}

In \cite{OSzNew}, Ozsv{\'a}th and Szab{\'o} introduce a family of differential graded algebras $\B(m,k,\Sc)$, where $0 \leq k \leq m$ and $\Sc \subset \{1,\ldots,m\}$. They also define Type D structures, DA bimodules, and Type A structures over these algebras (for a review of this terminology from bordered Heegaard Floer homology, see \cite{LOTBimod}). These algebraic constructions may be used to efficiently compute the knot Floer homology of a knot (\cite{HFKOrig,RThesis}), given a planar projection. 

When $\Sc = \emptyset$, $\B(m,k,\Sc)$ is concentrated in homological degree zero and has no differential. Thus, it may be viewed as an ordinary (noncommutative) algebra with an intrinsic grading by $\Q^m$ called the multi-Alexander grading. In this paper we will be concerned with the special case $\Sc = \emptyset$ and $k = 1$. We will also refine the multi-Alexander grading to a grading by $\Q^{2m}$; see Section~\ref{OSzSect}. It will be more natural to consider a subalgebra $\Ccl(m,k,\Sc)$ of $\B(m,k,\Sc)$, as defined below. 

In \cite{KSQuiver}, Khovanov and Seidel introduce graded rings $A_m$, $m \geq 2$, and dg bimodules $R_i$ over these rings. (Similar rings and bimodules were defined independently in \cite{RouZim}.) The goal of this paper is to relate $A_{m-1} \otimes \Z/2\Z$, and the corresponding dg bimodules, with a $\Z$-graded version $\Ccl^{bot.gr.}(m,1,\emptyset)$ of $\Ccl(m,1,\emptyset)$ and the corresponding Type DA bimodules (the notation is an abbreviation for ``bottom gradings'').

For the algebras, we have the following theorem:
\begin{theorem}[cf. Theorem~\ref{BodyAlgThm}]\label{IntroAlgThm}
Let $m \geq 2$. The Khovanov-Seidel quiver algebra $A_{m-1} \otimes \Z/2\Z$ is isomorphic, as a $\Z$-graded algebra over $\Z/2\Z$, to $\Ccl^{bot.gr.}(m,1,\emptyset)$ modulo the ideal
\[
\langle U_1 + \ldots + U_m \rangle \subset \Ccl^{bot.gr.}(m,1,\emptyset).
\]
\end{theorem}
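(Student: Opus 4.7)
The plan is to exhibit a presentation of each algebra by generators and relations and match them directly. First, using the description of $\Ccl^{bot.gr.}(m,1,\emptyset)$ developed in Section~\ref{OSzSect}, I will catalog its generators: a collection of orthogonal idempotents indexed by the single-particle states, Reeb-chord-type elements moving the particle between adjacent positions, and the polynomial generators $U_1, \ldots, U_m$. I will also record the defining relations --- vanishing of certain length-two Reeb compositions that skip a position, and identifications of round-trip Reeb products with specific $U_i$'s.

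Next, after setting the appropriate index convention between the two sides, I will define a candidate $\Z/2\Z$-algebra homomorphism from $A_{m-1} \otimes \Z/2\Z$ to the quotient $\Ccl^{bot.gr.}(m,1,\emptyset) / \langle U_1 + \cdots + U_m \rangle$ sending each Khovanov-Seidel vertex idempotent to the matching OS idempotent and each quiver arrow $(i|i \pm 1)$ to the corresponding Reeb generator. The key verification is that Khovanov-Seidel's defining relations hold in the target: the length-two vanishing relations $(i|i+1)(i+1|i+2) = 0 = (i|i-1)(i-1|i-2)$ and the zig-zag equalities $(i|i-1)(i-1|i) = (i|i+1)(i+1|i)$. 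The vanishing relations should come directly from the OS relations. For the zig-zag equalities, I expect the two length-two loops at an internal vertex to be identified with two distinct $U_i$'s, and the quotient relation $U_1 + \cdots + U_m = 0$, read off in the relevant idempotent corner, to force them to agree.

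To show the map is an isomorphism, I will exhibit a $\Z/2\Z$-basis for each algebra and check the correspondence directly. Khovanov-Seidel's algebra has a well-known finite basis consisting of idempotents, single arrows, and one length-two loop per internal vertex, graded by path length, which should match the $\Z$-grading in the theorem. On the OS side, I will show that $\Ccl^{bot.gr.}(m,1,\emptyset) / \langle U_1 + \cdots + U_m \rangle$ admits a basis of the same shape, with the specialized bottom multi-Alexander grading matching path length after the identification.

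The main obstacle I anticipate is this last step. Enumerating the basis of $\Ccl^{bot.gr.}(m,1,\emptyset)$ carefully enough to see precisely which relations the ideal $\langle U_1 + \cdots + U_m \rangle$ imposes --- and to check that no unexpected collapses or non-collapses occur --- requires pushing through the combinatorics of the OS algebra in the $k=1$ case. In particular, the rank and grading comparison will depend on showing that arbitrary products of $U_i$'s and Reeb generators reduce, modulo the ideal, to the finite basis described above, matching the Khovanov-Seidel side exactly. Getting the index shift right, so that $m$ OS idempotents correspond appropriately to $m-1$ Khovanov-Seidel idempotents (presumably with one OS idempotent identified or absorbed via the subalgebra condition defining $\Ccl$), will also require careful attention.
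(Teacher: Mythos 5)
Your proposal is correct in outline and hinges on the same key observation as the paper: multiplying $U_1 + \cdots + U_m$ by each idempotent $\Ib_{(i)}$ in turn shows that the ideal $\langle U_1 + \cdots + U_m \rangle$ equals $\langle R_1L_1,\ L_iR_i + R_{i+1}L_{i+1},\ L_{m-1}R_{m-1} + U_m \rangle_{1 \leq i \leq m-2}$, which is exactly what makes the zig-zag relations $(i|i+1|i) = (i|i-1|i)$, the relation $(0|1|0)=0$ (don't forget this one --- it comes from the corner at $\Ib_{(0)}$), and the identification $U_m = L_{m-1}R_{m-1}$ hold in the quotient. Where you diverge from the paper is the final step. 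The paper works with the surjection $\phi$ in the other direction and proves $\ker\phi = \langle U_1+\cdots+U_m\rangle$ by a short reduction: given $a \in \ker\phi$, first use $L_{m-1}R_{m-1}+U_m$ to eliminate all occurrences of $U_m$, then observe that what remains is a combination of the defining relations of $A_{m-1}$, hence lies in the ideal. This never requires a basis of the infinite-dimensional algebra $\Ccl^{bot.gr.}(m,1,\emptyset)$. Your plan instead enumerates a basis of the quotient and matches it against the known basis of $A_{m-1}\otimes\Z/2\Z$; this works, and the spanning half (reducing arbitrary products of $U_i$'s and Reeb generators to idempotents, single arrows, and one loop per vertex) is a manageable computation since $U_i^2$ already lies in the ideal. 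But be aware that the linear-independence half of your basis comparison is not free: to see that the $4m-3$ proposed basis elements remain independent in the quotient, the cleanest route is to exhibit a map back to $A_{m-1}\otimes\Z/2\Z$ killing the ideal --- which is precisely the paper's $\phi$. So in practice your argument converges to the paper's, at the cost of the extra spanning computation; the payoff is that you get an explicit basis of the quotient, which the paper's kernel argument does not directly produce.
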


To a positive generator $\sigma_i$ of the braid group on $m$ strands, Khovanov and Seidel associate a dg bimodule $R_i$ over $(A_{m-1},A_{m-1})$. In Section \ref{DAReformSect}, we define a DA bimodule $\R_i$ with trivial higher actions, whose associated dg bimodule is $R_i$ with the homological grading reversed. 

Similarly, to a negative braid generator $\sigma_i^{-1}$, Khovanov and Seidel associate a dg bimodule $R'_i$, and we will define a DA bimodule $\R'_i$ whose associated dg bimodule is $R'_i$ with the homological grading reversed.

To a positive braid generator $\sigma_i$, Ozsv{\'a}th and Szab{\'o} associate a DA bimodule $\Pc^i$ over $(\B(m,1,\emptyset),\B(m,1,\emptyset))$; we will work with an idempotent-truncated version $\Pc^i_l$ over $(\Ccl(m,1,\emptyset),\Ccl(m,1,\emptyset))$, defined below in Figure \ref{PiGraphFig}.

Similarly, to a negative braid generator $\sigma_i^{-1}$, Ozsv{\'a}th and Szab{\'o} associate a DA bimodule $\Nc^i$ over $(\B(m,1,\emptyset),\B(m,1,\emptyset))$. We will work instead with the truncated version $\Nc^i_l$, defined below in Figure \ref{NiGraphFig}.

The gradings we define on $\Pc^i_l$ and $\Nc^i_l$ are a refinement of Ozsv{\'a}th-Szab{\'o}'s multi-Alexander grading (up to a shift) from which gradings corresponding to those on $\R_i$ and $\R'_i$ can be extracted. When we refer to $\Pc^i_l$ or $\Nc^i_l$, we will always mean either the version with the refined gradings, or the grading-collapsed versions over $(\Ccl^{bot.gr.}(m,1,\emptyset),\Ccl^{bot.gr.}(m,1,\emptyset))$; which one should be clear from context.

Let
\[
\phi: \Ccl^{bot.gr.}(m,1,\emptyset) \to A_{m-1} \otimes \Z/2\Z
\]
denote the quotient map of Theorem~\ref{IntroAlgThm}. Following Section 2.4.2 of \cite{LOTBimod}, one may use $\phi$ to obtain DA bimodules
\[
\Rest_{\phi} \R_i, \Rest_{\phi} \R'_i
\]
and
\[
{^{\phi}}\Induct \Pc^i_l, {^{\phi}}\Induct \Nc^i_l
\]
over $(A_{m-1} \otimes \Z/2\Z,\Ccl^{bot.gr.}(m,1,\emptyset))$.

\begin{theorem}[cf. Theorem~\ref{BodyBimodThmP}, Theorem~\ref{BodyBimodThmN}]\label{IntroBimodThm}
There exist homotopy equivalences
\[
\Rest_{\phi} \R_i \sim {^{\phi}}\Induct \Pc^i_l
\]
and
\[
\Rest_{\phi} \R'_i \sim {^{\phi}}\Induct \Nc^i_l
\]
 of DA bimodules over $(A_{m-1} \otimes \Z/2\Z,\Ccl^{bot.gr.}(m,1,\emptyset))$. 
\end{theorem}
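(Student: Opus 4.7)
The plan is to prove both homotopy equivalences by computing the two DA bimodules explicitly, matching generators via the quotient map $\phi$, and exhibiting a cancellation that reduces the induced Ozsv{\'a}th--Szab{\'o} bimodule to the restricted Khovanov--Seidel bimodule. I will concentrate on the positive crossing case; the negative crossing case proceeds by an entirely parallel argument on the corresponding graphs.

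First I would unpack both sides. The DA bimodule $\Rest_{\phi} \R_i$ has the same generators as $\R_i$, with its internal differential $\delta^1_1$ and single nontrivial higher action $\delta^1_2$ unchanged except that the right input is now an element of $\Ccl^{bot.gr.}(m,1,\emptyset)$ acting through $\phi$; all $\delta^1_j$ for $j \geq 3$ vanish by construction of $\R_i$. On the other side, since $\phi$ is surjective with kernel $\langle U_1 + \cdots + U_m \rangle$ by Theorem~\ref{IntroAlgThm}, the induced bimodule ${^{\phi}}\Induct \Pc^i_l$ has underlying chain complex obtained from $\Pc^i_l$ by quotienting by the left action of this ideal, with structure maps projected from those of $\Pc^i_l$ as specified graphically in Figure~\ref{PiGraphFig}.

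Next I would set up an explicit comparison. The idempotent rings on both sides are indexed in the $k=1$ case by singleton subsets $\{j\} \subset \{1,\ldots,m\}$, and these match on the nose under $\phi$. I would identify generators of $\Rest_{\phi} \R_i$ and ${^{\phi}}\Induct \Pc^i_l$ lying in corresponding idempotent summands, and use the refined multi-Alexander gradings (valued in $\Q^{2m}$) to constrain the form of any DA morphism between them. With gradings fixed, the candidate DA bimodule morphism $f \colon \Rest_{\phi} \R_i \to {^{\phi}}\Induct \Pc^i_l$ built from matching generators has only a small number of components to verify.

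The main obstacle will be reconciling the nontrivial higher actions $\delta^1_{1+j}$ (for $j \geq 2$) of $\Pc^i_l$ with the trivial higher actions on the Khovanov--Seidel side. My strategy is to show that after inducting along $\phi$, the higher operations either annihilate on the quotient or are supported on acyclic summands which can be removed by repeated Gaussian elimination within the DA bimodule category. Because the induction kills $U_1 + \cdots + U_m$ on the left, many of the chains of higher actions visible in Figure~\ref{PiGraphFig} collapse, and I expect the residual higher actions to fit into cancelling pairs; after cancellation the remaining DA bimodule should match $\Rest_{\phi} \R_i$ exactly, with $f$ manifestly a homotopy equivalence. The negative crossing statement is then handled by the same analysis applied to Figure~\ref{NiGraphFig}, using the analogous graphical description of $\Nc^i_l$ and the trivial higher actions of $\R'_i$.
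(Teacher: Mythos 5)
Your overall framing (explicit graphical comparison of the two sides, matching generators via $\phi$, reduction by cancellation) is reasonable, but the central step goes in the wrong direction and rests on a false expectation. Induction of scalars along $\phi$ only modifies the \emph{outputs} of the structure maps of $\Pc^i_l$ (the left, output algebra); the inputs still come from $\Ccl^{bot.gr.}(m,1,\emptyset)$, which is untouched. Hence the $\delta^1_3$ operations of $\Pc^i_l$ survive in ${^{\phi}}\Induct \Pc^i_l$ essentially unchanged: they do not ``annihilate on the quotient,'' nor are they supported on acyclic summands. Moreover, Gaussian elimination can only \emph{decrease} the number of generators, while ${^{\phi}}\Induct \Pc^i_l$ already has two fewer generators than $\Rest_{\phi}\R_i$ (for generic $i$, $m+2$ versus $m+4$, since $(i)A_{m-1}$ has basis $(i)$, $(i|i-1)$, $(i|i+1)$, $(i|i-1|i)$, and $\Ib_l(m)$ contributes $m$ more). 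So no amount of cancellation inside ${^{\phi}}\Induct \Pc^i_l$ can yield something matching $\Rest_{\phi}\R_i$ ``exactly.''

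The workable move is the reverse: cancel the differential $\langle (i)\rangle \to \langle (i)\otimes(i)\rangle$ inside $\Rest_{\phi}\R_i$ (Lemma 2.12 of \cite{OSzNew}, i.e.\ Gaussian elimination for DA bimodules). This cancellation \emph{creates} $\delta^1_3$ operations on the Khovanov--Seidel side, coming from zig-zags of two $\delta^1_2$ arrows through the cancelled pair, and it is precisely these induced higher operations that are to be matched against the $\delta^1_3$ operations of ${^{\phi}}\Induct\Pc^i_l$. Even after this reduction the obvious generator-matching map is not a DA homomorphism on the nose: one must add correction terms $f_2$ with algebra inputs $U_i$, $R_iU_i$, $U_{i+1}$, $L_{i+1}U_{i+1}$ (and analogous terms for the inverse) to absorb the remaining discrepancies between the $\delta^1_2$ and $\delta^1_3$ arrows of the two diagrams. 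Your proposal is missing both the cancellation in the correct direction and these correction terms; the grading constraints you invoke are consistent with the final answer but do not by themselves produce the morphism.
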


\subsection{Motivation}

An interesting problem in knot theory is to find relationships between the knot homologies defined by Khovanov \cite{KhOrig} and Khovanov-Rozansky \cite{KhRozI, KhRozII} and Heegaard Floer invariants, including knot Floer homology and the Heegaard Floer homology of the branched double cover. Ozsv{\'a}th and Szab{\'o} \cite{OSzBDC} found the first such relationship in the form of a spectral sequence for a link $L$ in $S^3$ with $E_2$ page given by the reduced Khovanov homology of $L$ and $E_{\infty}$ page given by $\HF(\Sigma(m(L)))$, where $\Sigma(m(L))$ is the double cover of $S^3$ branched along the mirror $m(L)$ of $L$. Rasmussen \cite{KPKH} conjectured that for a knot $K$, a spectral sequence exists whose $E_2$ page is the reduced Khovanov homology of $K$ and whose $E_{\infty}$ page is $\HFK(S^3,K)$. Dunfield-Gukov-Rasmussen \cite{DGR} and Rasmussen \cite{RDiff} formulated a similar conjecture involving Khovanov-Rozansky's HOMFLY-PT homology (\cite{KhRozII}); the $E_2$ page should be HOMFLY-PT homology, and the $E_{\infty}$ page should be $\HFK$. There has been recent progress on both conjectures; see \cite{BLS,MUntwisted,GFramedGraphs,Nate15a,Nate15b}.

Roberts \cite{RobertsSS} generalized Ozsv{\'a}th-Szab{\'o}'s spectral sequence in the setting of a link $L$ embedded in a thickened annulus with ``axis'' $B$ (an unknot in $S^3$ linking the standard thickened annulus once). The $E_2$ page of Roberts' sequence is given by the sutured annular Khovanov homology $SKh(L)$ (see \cite{APS1}), and the $E_{\infty}$ page is given by $\HFK(\Sigma(m(L)),\widetilde{B})$ where $\widetilde{B}$ is the preimage of the axis $B$ under the double-covering projection $\Sigma(m(L)) \to S^3$. 

Given a closed braid $L$ which is the closure of an open braid $L'$, one can try to study invariants of $L$ or $L'$ ``locally,'' letter by letter in a braid word. In \cite{AGW13}, Auroux, Grigsby, and Wehrli analyze $SKh$ along these lines: they show that one summand of $SKh(L)$ is the Hochschild homology of the Khovanov-Seidel dg bimodule associated to $m(L')$ by tensoring together dg bimodules $R_i$ and $R'_i$ for braid generators $\sigma_i$ and $\sigma_i^{-1}$. They also obtain a ``local'' version of the branched-double-cover spectral sequence; see \cite{AGW11} too.

The Khovanov-Seidel quiver algebras $A_{m-1}$ and dg bimodules are the special case $k = 1$ of graded algebras $A^{k,m-k}$ and dg $(A^{k,m-k},A^{k,m-k})$-bimodules. These algebras and bimodules were defined explicitly by Stroppel \cite{Str09} (in the case $m = 2k$, but see Remark 5.8.2 of \cite{Str09}) and Brundan-Stroppel \cite{BrStr1, BrStr2}. They were defined independently by Chen-Khovanov \cite{ChenKh}. The authors of \cite{AGW13} conjectured that the rest of the summands of $SKh(L)$ are isomorphic to the Hochschild homology of the appropriate dg bimodules over $(A^{k,m-k},A^{k,m-k})$; this conjecture has recently been proved in \cite{BPW}.

Whereas the relationship between bimodules over $(A^{k,m-k},A^{k,m-k})$ and sutured annular Khovanov homology is relatively new, the relationship with the usual Khovanov homology has been established since the definition of these algebras and bimodules and was an important motivation for their introduction; see \cite{ChenKh, Str09}. Rather than taking Hochschild homology, one introduces additional dg bimodules for local maxima and minima of a link projection. The tensor product of all dg bimodules for a closed link is a chain complex whose homology is Khovanov homology (up to mirroring conventions which we will ignore).

The sum of the algebras $A^{k,m-k}$, for $0 \leq k \leq m$, can be viewed as a categorification of the representation $V^{\otimes m}$ of $\Uqtwo$, where $V$ is the fundamental representation. Each individual algebra $A^{k,m-k}$ categorifies a different weight space of $V^{\otimes m}$. The existence of dg bimodules for each local piece of a link projection, such that the tensor product over all pieces of a closed link computes Khovanov homology, is a categorification of the existence of a ribbon structure on the $\otimes$-subcategory of $\Uqtwo$-representations generated by $V$ which encodes the Jones polynomial. (The whole category of finite-dimensional representations of $\Uqtwo$ also has a ribbon structure, which encodes the colored Jones polynomials for $\mathfrak{sl}(2)$.)

From the above, it is clear that the Khovanov-Seidel quiver algebras and dg bimodules are closely related to Khovanov homology, as well as to related variants like the sutured annular theory. Using the framework of the Ozsv{\'a}th-Szab{\'o} spectral sequence, the algebras and bimodules have also been shown to be related to Heegaard-Floer invariants of the branched double cover in \cite{AGW13,AGW11}. The current paper explores yet another appearance of Khovanov-Seidel's construction, this time in connection with a theory that computes knot Floer homology.

Whereas the Jones polynomial can be interpreted using representations of $\Uqtwo$, the Alexander polynomial can be interpreted using representations of $\Uq$ (see \cite{RozSal, Resh92, KSFF} as well as \cite{VirQ, SartoriAlexander} for more modern treatments). It is natural to ask, then, whether Ozsv{\'a}th-Szab{\'o}'s theory in \cite{OSzNew} for knot Floer homology can be viewed as categorifying some $\Uq$-representation setup. This is indeed the case and will be addressed in  \cite{MyDecatPaper}.

Sartori \cite{Sartori} has a categorification of the representation $V^{\otimes m}$ of $\Uq$, where $V$ is the fundamental representation, as well as a categorification of the action of braid group generators on $V^{\otimes m}$. It is hoped that a suitable generalization of Sartori's theory will be shown to compute knot Floer homology. The Khovanov-Seidel quiver algebras and dg bimodules appear as the restriction of Sartori's categorification to one particular weight space of $V^{\otimes m}$.

The appearance of Khovanov-Seidel quiver algebras in categorifications of both $\Uqtwo$-representations and $\Uq$-representations is an interesting coincidence. On the decategorified level, it can be explained as follows: indecomposable projective modules over the categorification algebras correspond to certain canonical or dual-canonical basis elements for $V^{\otimes m}$. The decomposition of $V^{\otimes m}$ into weight spaces is the same whether $V^{\otimes m}$ is regarded as a $\Uqtwo$-module or a $\Uq$-module, but the canonical bases are different. However, the bases do agree (with suitable conventions) on one particular weight space, and this weight space is the one categorified by the Khovanov-Seidel quiver algebras.

The braid group action on this particular weight space of $V^{\otimes m}$ may be identified with the Burau representation of the braid group. This representation has a natural topological definition. Its relationship with the Alexander polynomial is classical; the connection with $\Uq$-representations was introduced in \cite{KSFF}. The relationship with the Jones polynomial appears already in Jones' paper \cite{JonesOrig}.

Generalizing the result of this paper to find a spectral sequence between Khovanov homology and knot Floer homology seems difficult, since the coincidence discussed above does not hold for most of the weight spaces of $V^{\otimes m}$. One possible way forward would be to find categorifications of $V^{\otimes m}$ as a module over $\Uqtwo$ and $\Uq$ such that the indecomposable projective modules over the categorification algebras are in bijection with standard tensor-product basis elements of $V^{\otimes m}$, rather than canonical basis elements. This tensor product basis does not depend on the structure of $V^{\otimes m}$ as a representation. On the $\Uq$ side, Tian \cite{TianUT} categorifies the tensor product basis of $V^{\otimes m}$, although he works in a modified setting. For both $\Uqtwo$ and $\Uq$, unpublished work of Rouquier provides a promising candidate for the tensor-product-basis categorification. The hope is that this construction, applied to $\Uq$, can be shown to compute knot Floer homology, and that viewing the $\Uq$ categorification as a deformation of the $\Uqtwo$ categorification will yield a spectral sequence from Khovanov homology to knot Floer homology.

\subsection*{Acknowledgments}
The author would especially like to thank Robert Lipshitz, Ciprian Manolescu, Rapha{\"e}l Rouquier, Antonio Sartori, and Zolt{\'a}n Szab{\'o} for useful suggestions, comments, and discussions concerning this paper. The author was supported through NSF grants DGE-1148900 and DMS-1502686.

\section{Khovanov-Seidel's construction}\label{KSSect}

We briefly review some definitions from \cite{KSQuiver}. 

\begin{definition}[cf. \cite{KSQuiver}, Section 1b]
Let $m \geq 2$. The graded ring (or graded $\Z$-algebra) $A_{m-1}$ is the quotient of the quiver algebra
\[
(0) \mathrel{\mathop{\rightleftarrows}^{(0|1)}_{(1|0)}} (1) \mathrel{\mathop{\rightleftarrows}^{(1|2)}_{(2|1)}} (2) \mathrel{\mathop{\rightleftarrows}^{(2|3)}_{(3|2)}} \cdots \mathrel{\mathop{\rightleftarrows}^{(m-2|m-1)}_{(m-1|m-2)}} (m-1),
\]
with $\deg (i) = \deg (i|i+1) = 0$ and $\deg (i+1|i) = 1$, by the relations 
\begin{align*}
&(i-1|i|i+1) = 0 = (i+1|i|i-1), \\
& (i|i+1|i) = (i|i-1|i), \\
&(0|1|0) = 0
\end{align*}
for $0 < i < m-1$. The notation means, for example, 
\[
(i-1|i|i+1) := (i-1|i) \cdot (i|i+1).
\]
\end{definition}

\begin{remark}
In \cite{KSQuiver}, Khovanov and Seidel write composition in a quiver algebra left-to-right rather than right-to-left; thus, for example, the algebra element $(i|i+1)$ is an arrow from vertex $(i)$ to vertex $(i+1)$, not the other way around. We choose to follow this convention throughout the paper.
\end{remark}

For $0 \leq i \leq m-1$, the elementary idempotent of $A_{m-1}$ corresponding to vertex $(i)$ of the quiver is also denoted $(i)$. The subalgebra of $A_{m-1}$ generated by the idempotents $(i)$ will be denoted $\Ib_l(m)$; it is isomorphic to $\Pi_{i=0}^{m-1} \Z$ as a ring. When working modulo $2$, we will abuse notation and identify $\Ib_l(m)$ with the $\Z/2\Z$-subalgebra of $A_{m-1} \otimes \Z/2\Z$ generated by $\{(i): 0 \leq i \leq m-1\}$, which is isomorphic to $\Pi_{i=0}^{m-1} \Z/2\Z$ as a $\Z/2\Z$-algebra. We may view $A_{m-1}$ as an algebra over its idempotent ring $\Ib_l(m)$.

For each elementary idempotent $(i)$, we have an indecomposable projective left $A_{m-1}$-module
\[
P_i := A_{m-1}(i)
\]
and an indecomposable projective right $A_{m-1}$-module
\[
{_i}P := (i)A_{m-1}.
\]

The definition of Khovanov-Seidel's dg bimodules $R_i$ and $R'_i$ uses morphisms
\[
\beta_i: P_i \otimes_{\Z} {_i}P \to A_{m-1}
\]
and
\[
\gamma_i: A_{m-1} \to P_i \otimes_{\Z} {_i}P \{-1\}
\]
of $(A_{m-1},A_{m-1})$-bimodules, where $\{ \cdot \}$ denotes upward shift in the intrinsic grading (following the conventions of \cite{KSQuiver}). These morphisms are defined by 
\[
\beta_i((i) \otimes (i)) := (i)
\]
and
\begin{align*}
\gamma_i(1) &:= (i-1 | i) \otimes (i | i-1) \\
&+ (i+1|i) \otimes (i|i+1) \\
&+ (i) \otimes (i|i-1|i) \\
&+ (i|i-1|i) \otimes (i).
\end{align*}

\begin{definition}[cf. Section 2d of \cite{KSQuiver}] 
For $1 \leq i \leq m-1$, define $R_i$ to be the complex of $A_{m-1}$-modules
\[
0 \rightarrow P_i \otimes_{\Z} {_i}P \xrightarrow{\beta_i} A_{m-1} \rightarrow 0.
\]
The intrinsic grading on $R_i$ is inherited from $P_i \otimes {_i}P$ and $A_{m-1}$. $A_{m-1}$ is placed in homological degree $0$, and $P_i \otimes {_i}P$ is placed in homological degree $-1$.

Define $R'_i$ to be the complex of $A_{m-1}$-modules
\[
0 \rightarrow A_{m-1} \xrightarrow{\gamma_i} P_i \otimes_{\Z} {_i}P \{-1\} \rightarrow 0.
\]
$A_{m-1}$ is placed in homological degree $0$, and $P_i \otimes_{\Z} {_i}P \{-1\}$ is placed in homological degree $1$.
\end{definition}

\section{Reformulation with DA bimodules}\label{DAReformSect}

Ozsv{\'a}th and Szab{\'o} assign Type DA bimodules to crossings in Section 5 of \cite{OSzNew}. Thus, it will be useful to view Khovanov-Seidel's dg bimodules $R_i$ in this context. 

Let $\A$, $\A'$ be dg algebras over idempotent rings $\Ib$, $\Ib'$ (the idempotent rings should be finite direct products of copies of $\Z/2\Z$). A DA bimodule 
\[
X = {^{\A}}X_{\A'}
\] 
over $(\A,\A')$ is a (left, right) bimodule over $(\Ib,\Ib')$ together with, for each $j \geq 1$, a grading-preserving $(\Ib,\Ib')$-bilinear map
\[
\delta^1_j: X \otimes_{\Ib'} T^{j-1} (\A'[1]) \to \A[1] \otimes_{\Ib} X,
\]
satisfying certain compatibility conditions. The shift $[\cdot]$ is an upward shift in the homological grading. For a complete definition and some basic properties of DA bimodules, see Section 2.2.4 of \cite{LOTBimod}.

The dg algebras we consider all have a homological grading by $\Z$ as well as an intrinsic grading by $\Z$, $\Q^m$, $\Q^{2m}$, or a similar group. Our DA bimodules will also have a homological grading by $\Z$ and an intrinsic grading by a grading group $G$, together with homomorphisms from the grading groups of the two algebras into $G$. For the DA bimodules considered here, $G$ will be the same as the grading groups of the two algebras, but the homomorphisms will not necessarily be the identity. 

If $X$ is a DA bimodule over $(\A,\A')$ with $\delta^1_j = 0$ for $j \geq 3$, then we have an honest dg bimodule $\A \boxtimes X$ over $(\A,\A')$, graded by $\Z \oplus G$ (see \cite{LOTBimod} for the definition of $\boxtimes$; in this case, $\A \boxtimes X := \A \otimes_{\Ib} X$). Left multiplication by $\A$ is defined as multiplication in the $\A$-factor of $\A \boxtimes X$. Right multiplication by $\A'$ is defined using the DA operation $\delta^1_2$, and the differential is defined using $\delta^1_1$. The DA compatibility conditions ensure that $\A \boxtimes X$ is a well-defined dg bimodule. Since $\delta^1_1$ preserves gradings, the differential on $\A \boxtimes X$ decreases homological grading by $1$.

\begin{definition}\label{KSDADef}
As a bimodule over the idempotent ring $\Ib_l(m)$, the DA bimodule $\R_i$ over $(A_{m-1} \otimes \Z/2\Z, A_{m-1} \otimes \Z/2\Z)$ is defined as 
\[
((i) \otimes_{\Z/2\Z} {_i}P) \oplus \Ib_l(m),
\]
with intrinsic $\Z$-grading inherited from 
\[
((i) \otimes_{\Z/2\Z} {_i}P) = {_i}P.
\]
$\Ib_l(m)$ is placed in intrinsic degree $0$. The DA operations are given by
\[
\delta^1_1 \langle (i) \otimes {_i}a_j \rangle = a \otimes \langle (j) \rangle,
\]
\[
\delta^1_2(\langle (i) \otimes {_i}a_j \rangle \otimes {_j}b_k) = \langle (i) \otimes ab \rangle,
\]
and
\[
\delta^1_2(\langle (i) \rangle \otimes {_i}a_j) = a \otimes \langle (j) \rangle.
\]
We use angle brackets $\langle \cdot \rangle$ around DA bimodule generators here to avoid confusion. All higher DA operations are zero. The homological degree of $\Ib_l(m)$ is defined to be $0$. The homological degree of $((i) \otimes_{\Z/2\Z} {_i}P)$ is defined to be $1$.
\end{definition}

\begin{definition}\label{KSDAPrimeDef}
As a bimodule over $\Ib_l(m)$, the DA bimodule $\R'_i$ over $(A_{m-1} \otimes \Z/2\Z, A_{m-1} \otimes \Z/2\Z)$ is defined as 
\[
\Ib_l(m) \oplus ((i) \otimes_{\Z/2\Z} {_i}P)\{-1\},
\]
with intrinsic $\Z$-grading inherited from 
\[
((i) \otimes_{\Z/2\Z} {_i}P)\{-1\} = {_i}P\{-1\}.
\]
$\Ib_l(m)$ is placed in intrinsic degree $0$. The DA operations are given by
\[
\delta^1_1 \langle (i-1)  \rangle = (i-1|i) \otimes \langle (i) \otimes (i|i-1) \rangle,
\]
\[
\delta^1_1 \langle (i+1)  \rangle = (i+1|i) \otimes \langle (i) \otimes (i|i+1) \rangle,
\]
\[
\delta^1_1 \langle (i)  \rangle = (i|i-1|i) \otimes \langle (i) \otimes (i) \rangle + 1 \otimes \langle (i) \otimes (i|i-1|i) \rangle,
\]
\[
\delta^1_2(\langle (i) \otimes {_i}a_j \rangle \otimes {_j}b_k) = \langle (i) \otimes ab \rangle,
\]
and
\[
\delta^1_2(\langle (i) \rangle \otimes {_i}a_j) = a \otimes \langle (j) \rangle.
\]
All higher DA operations are zero. The homological degree of $\Ib_l(m)$ is defined to be $0$. The homological degree of $((i) \otimes_{\Z/2\Z} {_i}P)\{-1\}$ is defined to be $-1$.
\end{definition}

\begin{remark}
With Khovanov-Seidel's conventions, differentials increase homological grading by $1$. Thus, to relate Khovanov-Seidel's and Ozsv{\'a}th-Szab{\'o}'s constructions, we will need to reverse the homological gradings on one side or the other. We choose the degree $(-1)$ differentials for DA bimodules, following \cite{LOTBimod}; thus, the homological grading on $\R_i$ and $\R'_i$ is the negative of the homological grading on $R_i$ and $R'_i$.
\end{remark}

\begin{figure} \centering
\includegraphics[scale=0.625]{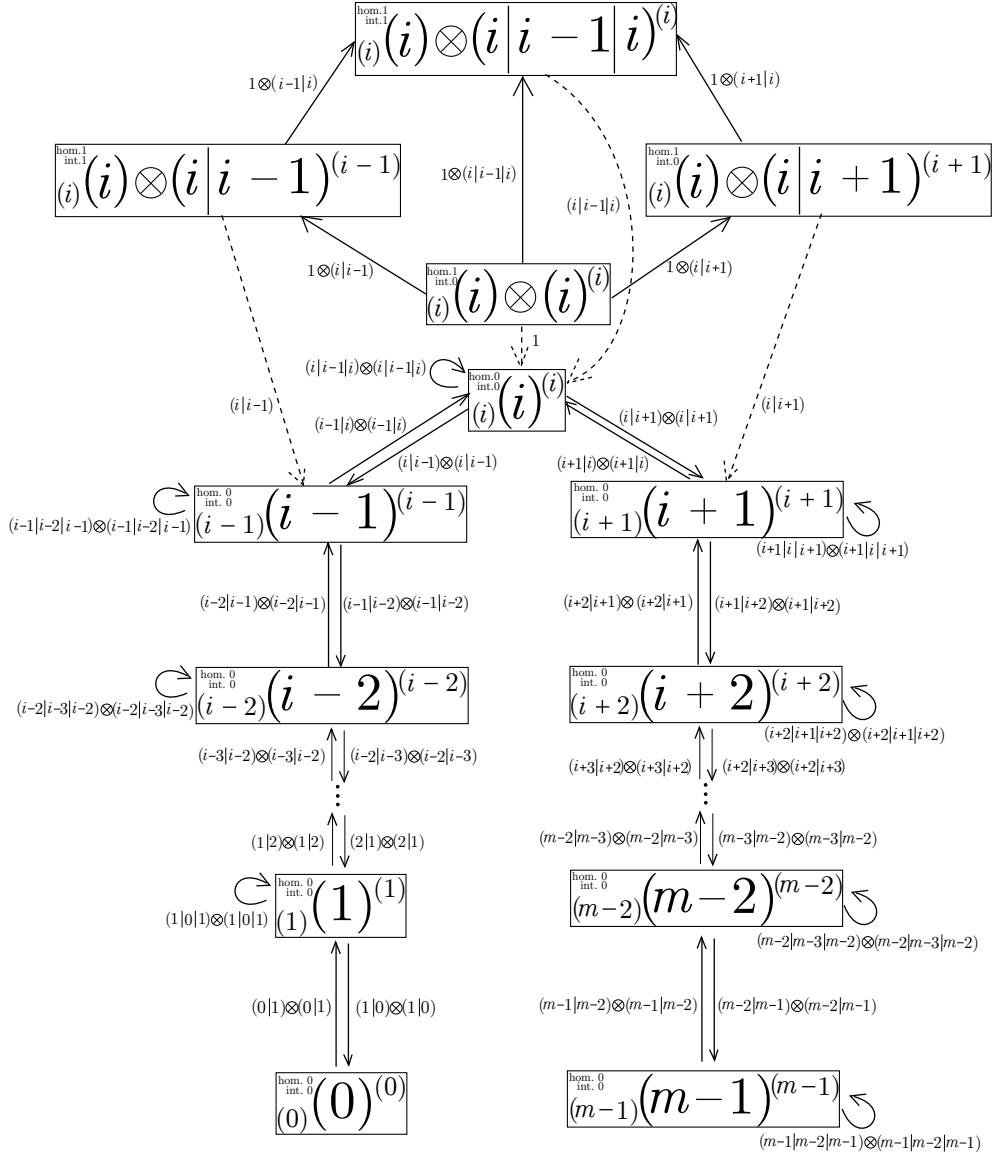}
\caption{The DA bimodule $\R_i$ for $1 \leq i < m-1$.}
\label{RiGraphFig}
\end{figure}

\begin{figure} \centering
\includegraphics[scale=0.625]{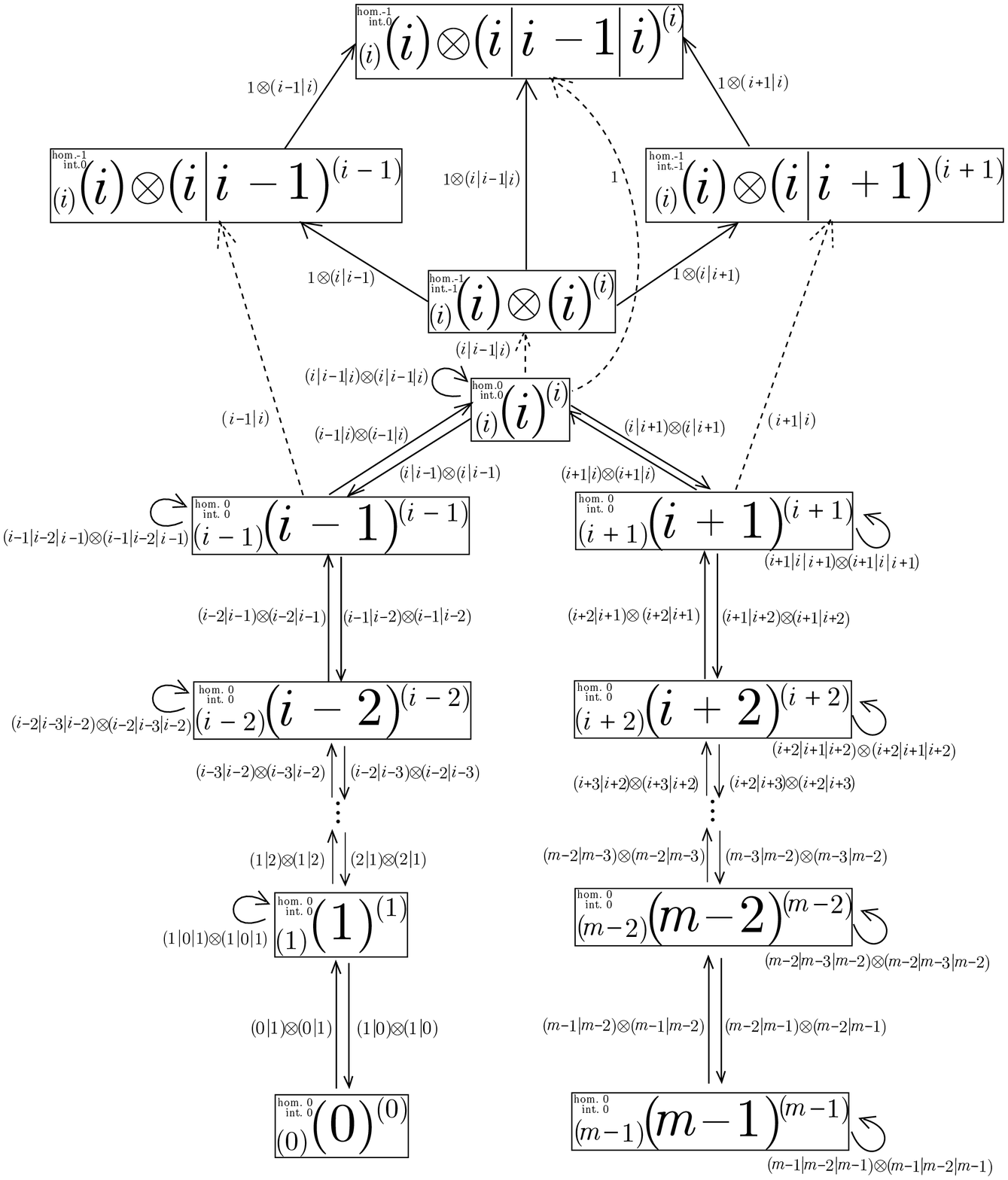}
\caption{The DA bimodule $\R'_i$ for $1 \leq i < m-1$. The solid arrows are the same as in Figure~\ref{RiGraphFig}; only the dotted arrows and gradings of generators have changed.}
\label{RPrimeiGraphFig}
\end{figure}

\begin{proposition}
$R_i \otimes \Z/2\Z$ is isomorphic to the dg bimodule 
\[
(A_{m-1} \otimes \Z/2\Z) \boxtimes \R_i
\]
associated to $\R_i$, via an isomorphism which preserves intrinsic degrees and multiplies homological degrees by $-1$. Similarly, $R'_i \otimes \Z/2\Z$ is isomorphic to 
\[
(A_{m-1} \otimes \Z/2\Z) \boxtimes \R'_i,
\]
via an isomorphism which preserves intrinsic degrees and reverses homological degrees.
\end{proposition}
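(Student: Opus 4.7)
The plan is a direct, mostly mechanical verification: identify the underlying bimodules, then check that the right action, the differential, and the gradings all correspond under the identification. First I would unpack $(A_{m-1} \otimes \Z/2\Z) \boxtimes \R_i = (A_{m-1} \otimes \Z/2\Z) \otimes_{\Ib_l(m)} \R_i$ summand by summand. The $\Ib_l(m)$ summand of $\R_i$ contributes $A_{m-1} \otimes \Z/2\Z$, while the $((i) \otimes_{\Z/2\Z} {_i}P)$ summand contributes $A_{m-1}(i) \otimes_{\Z/2\Z} (i) A_{m-1} = P_i \otimes_{\Z/2\Z} {_i}P$. This matches the underlying $\Z/2\Z$-bimodule of $R_i \otimes \Z/2\Z$, and exactly the same computation identifies $(A_{m-1} \otimes \Z/2\Z) \boxtimes \R'_i$ with $R'_i \otimes \Z/2\Z$ as bimodules.

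For $\R_i$, the right action via $\delta^1_2$ is simply right multiplication on the right tensor factor in each summand, which coincides with the right action on $R_i \otimes \Z/2\Z$. The differential on the $\boxtimes$ comes from $\delta^1_1$: the operation $\delta^1_1 \langle (i) \otimes {_i}a_j \rangle = a \otimes \langle (j) \rangle$ translates through $\boxtimes$ into $b \otimes \langle (i) \otimes {_i}a_j \rangle \mapsto ba \otimes \langle (j) \rangle$, which under our identification is the map $b \otimes {_i}a_j \mapsto ba = \beta_i(b \otimes {_i}a_j)$. Intrinsic gradings are inherited identically from $P_i \otimes {_i}P$ and $A_{m-1}$ in both pictures; reversing the homological grading of $R_i$ (where $A_{m-1}$ sits in degree $0$ and $P_i \otimes {_i}P$ in degree $-1$) exactly produces the homological grading of $\R_i$ (with the same summands in degrees $0$ and $1$).

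For $\R'_i$, the right-action and grading checks are identical; the only new content is that $\delta^1_1$ agrees with $\gamma_i$. Since $\gamma_i$ is a bimodule map from $A_{m-1}$ to $P_i \otimes {_i}P\{-1\}$ it is determined by its values on the idempotents, and a direct computation from the formula for $\gamma_i(1)$ gives
\begin{align*}
\gamma_i((i-1)) &= (i-1|i) \otimes (i|i-1),\\
\gamma_i((i+1)) &= (i+1|i) \otimes (i|i+1),\\
\gamma_i((i)) &= (i) \otimes (i|i-1|i) + (i|i-1|i) \otimes (i),
\end{align*}
with $\gamma_i((j)) = 0$ for all other $j$. Passing the three displayed $\delta^1_1$ formulas of Definition~\ref{KSDAPrimeDef} through $\boxtimes$ reproduces these expressions term by term. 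The main obstacle is precisely this bookkeeping: correctly tracking the origin of each of the four summands of $\gamma_i(1)$, in particular distinguishing the summand $(i|i-1|i) \otimes (i)$ arising from $(i|i-1|i) \otimes \langle (i) \otimes (i) \rangle$ in $\delta^1_1 \langle (i) \rangle$ from the summand $(i) \otimes (i|i-1|i)$ arising from $1 \otimes \langle (i) \otimes (i|i-1|i) \rangle$.
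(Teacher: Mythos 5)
Your proposal is correct and takes essentially the same route as the paper: the paper's proof is simply ``this follows immediately from the definitions,'' and your argument is exactly that definitional check spelled out (identifying $(A_{m-1}\otimes\Z/2\Z)\otimes_{\Ib_l(m)}\R_i$ summand by summand, matching $\delta^1_2$ with the right action, $\delta^1_1$ with $\beta_i$ resp.\ $\gamma_i$ via the values $\gamma_i((j))$, and confirming the intrinsic gradings and the sign reversal of the homological grading).
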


\begin{proof}
This follows immediately from the definitions.
\end{proof}

In Figure~\ref{RiGraphFig}, we depict the DA bimodule $\R_i$ (for $1 \leq i < m-1$) in the notation of Figure 5.4 of \cite{OSzNew}. Generators are shown in rectangles; the lower left and upper right subscripts indicate which idempotents can multiply these generators nontrivially on the left and right. A dotted arrow from a generator $x$ to another generator $y$, labeled by an algebra element $a$, indicates that $a \otimes y$ is a nonzero term of $\delta^1_1(x)$. 

A solid arrow from $x$ to $y$, labeled by an expression $a \otimes b$, indicates that $a \otimes y$ is a nonzero term of $\delta^1_2(x \otimes b)$. If $a$ or $b$ contain sums, we implicitly expand them out.

In general, a solid arrow from $x$ to $y$ labeled by $a \otimes (b_1, \ldots, b_{j-1})$ indicates that $a \otimes y$ is a nonzero term of $\delta^1_j(x \otimes (b_1 \otimes \cdots \otimes b_{j-1}))$ for $j > 1$. All sums should be expanded out. For $\R_i$ and $\R'_i$, none of these higher-action arrows are needed; however, we will need arrows labeled by $a \otimes (b_1,b_2)$ representing $\delta^1_3$ actions when we discuss Ozsv{\'a}th-Szab{\'o}'s bimodules $\Pc^i_l$ and $\Nc^i_l$.

Finally, for each generator $x$, there should be a solid arrow from $x$ to $x$ labeled by $1 \otimes 1$. To save space, we omit these arrows from the diagrams. Figure~\ref{RPrimeiGraphFig} shows $\R'_i$ in the same notation.

\begin{figure} \centering
\includegraphics[scale=0.625]{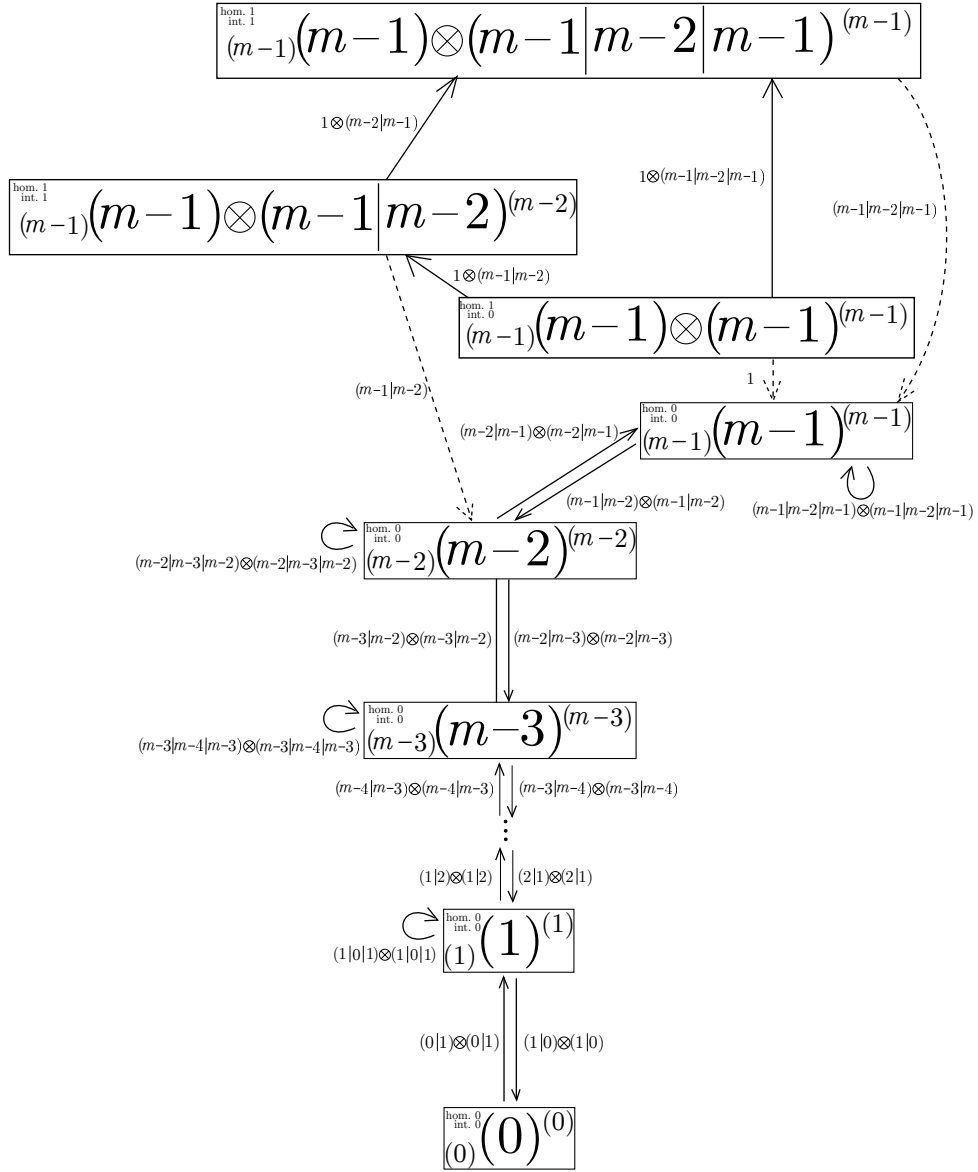}
\caption{The DA bimodule $\R_{m-1}$. The bimodule $\R'_{m-1}$ for a negative crossing is obtained similarly from Figure \ref{RPrimeiGraphFig}.}
\label{SpecialRiGraphFig}
\end{figure}

In Figure~\ref{SpecialRiGraphFig}, we depict the special case $\R_{m-1}$. It is obtained by omitting some generators and the adjacent arrows from the generic diagram for $\R_i$ in Figure~\ref{RiGraphFig}. The bimodule $\R'_{m-1}$ works similarly. 

\section{Ozsv{\'a}th-Szab{\'o}'s construction}\label{OSzSect}

Now we review definitions from \cite{OSzNew} with a grading refinement. We discuss gradings first. Ozsv{\'a}th-Szab{\'o}'s multi-Alexander grading on $\B(m,1,\emptyset)$ takes values in $\Q^m$:

\begin{definition}
Ozsv{\'a}th and Szab{\'o}'s multi-Alexander grading set is
\[
\Q\langle e_1, \ldots, e_m \rangle \cong \Q^{m},
\]
where the $e_i$ are names for basis vectors. 
\end{definition}

The coefficient of $e_i$ counts the ``weight'' of an algebra element on strand $i$. We may actually define a grading by $\Q^{2m}$ which counts the weight on the top and bottom of each strand. This refined grading is natural when thinking of algebra elements as determining limiting behavior of holomorphic curves for the appropriate Heegaard diagram.

\begin{definition}
Let the refined multi-Alexander grading set be 
\[
\Q\langle \tau_1, \ldots, \tau_m, \beta_1, \ldots, \beta_m \rangle \cong \Q^{2m},
\]
where the $\tau_i$ and $\beta_i$ are names for basis vectors. 
\end{definition}

In Section \ref{KSSect}, $\beta_i$ had an unrelated meaning, but since we will be working with the DA version $\R_i$ of $R_i$, we will not need the earlier $\beta_i$ again. There is yet another conflict of notation: in \cite{OSzNew}, $\tau_i^{\mathbf{gr}}$ refers to a homomorphism of grading groups. These conflicts are minor, and hopefully no confusion will arise.

One should think of $\tau_i$ as having weight $1$ on the top of strand $i$, and weight $0$ elsewhere. Similarly, $\beta_i$ has weight $1$ on the bottom of strand $i$ and weight $0$ elsewhere.

From the refined multi-Alexander grading, we can extract Ozsv{\'a}th-Szab{\'o}'s multi-Alexander gradings, as well as a single $\Z$-grading which will correspond to the grading on $A_{m-1}$:
\begin{definition}\label{EtaEpsilonDef}
The group homomorphism
\[
\eta: \Q \langle \tau_1, \ldots, \tau_m, \beta_1, \ldots, \beta_m \rangle \to \Q \langle e_1, \ldots, e_m \rangle
\]
sends each $\tau_i$ and $\beta_i$ to $e_i$. The homomorphism
\[
\epsilon: \frac{1}{2}\Z \langle \tau_1, \ldots, \tau_m, \beta_1, \ldots, \beta_m \rangle \to \Z
\]
sends each $\beta_i$ to $2 \in \Z$ and sends each $\tau_i$ to zero.
\end{definition}

Ozsv{\'a}th-Szab{\'o}'s multi-Alexander grading will be obtained by applying $\eta$ to the refined multi-gradings. Khovanov-Seidel's grading will be obtained by applying $\epsilon$ instead.

\subsection{Algebras}

\begin{definition}[cf. Section 3.2 of \cite{OSzNew}]
Let $m \geq 2$ (to parallel Khovanov-Seidel's definitions). The $\Q^{2m}$-graded $\Z/2\Z$-algebra $\B(m,1,\emptyset)$ is the quotient of the quiver algebra
\[
(0) \mathrel{\mathop{\rightleftarrows}^{R_1}_{L_1}} (1) \mathrel{\mathop{\rightleftarrows}^{R_2}_{L_2}} (2) \mathrel{\mathop{\rightleftarrows}^{R_3}_{L_3}} \cdots \mathrel{\mathop{\rightleftarrows}^{R_{m-1}}_{L_{m-1}}} (m-1) \mathrel{\mathop{\rightleftarrows}^{R_m}_{L_m}} (m) ,
\]
with $\deg R_i = \frac{1}{2} \tau_i$ and $\deg L_i = \frac{1}{2} \beta_i$, by the relations
\[
R_i R_{i+1} = 0 = L_{i+1} L_i
\]
for $1 \leq i \leq m-1$. Note that the refined grading takes values in $(\frac{1}{2}\Z)^{2m} \subset \Q^{2m}$. Applying the homomorphism $\eta$ of Definition~\ref{EtaEpsilonDef} gives Ozsv{\'a}th-Szab{\'o}'s multi-Alexander gradings. Applying $\epsilon$ instead yields $\deg R_i = 0$ and $\deg L_i = 1$.
\end{definition}

For $0 \leq i \leq m$, the elementary idempotent of $\B(m,1,\emptyset)$ corresponding to vertex $(i)$ of the quiver is denoted $\Ib_{(i)}$. For $1 \leq i \leq m$, we may also consider the elements
\[
U_i := R_i L_i + L_i R_i.
\]
The element $U_i$ only multiplies nontrivially with the idempotents $\Ib_{(i-1)}$ and $\Ib_{(i)}$ (compare with relation (3.5) in Section 3.2 of \cite{OSzNew}). The $U_i$ are central, and
\[
U_i^N = (R_i L_i)^N + (L_i R_i)^N \neq 0.
\]
Thus, $\B(m,1,\emptyset)$ is infinite-dimensional over $\Z/2\Z$. The refined multi-Alexander degree of $U_i$ is $\frac{1}{2}(\tau_i + \beta_i)$.

We will be more interested in an idempotent-truncated version of $\B(m,1,\emptyset)$:
\begin{definition}
$\Ccl(m,1,\emptyset)$ is defined to be 
\[
\bigg(\sum_{0 \leq i \leq m-1} \Ib_{(i)} \bigg) \cdot \B(m,1,\emptyset) \cdot \bigg( \sum_{0 \leq i \leq m-1} \Ib_{(i)} \bigg)
\]
\end{definition}

\begin{figure} \centering
\includegraphics[scale=0.325]{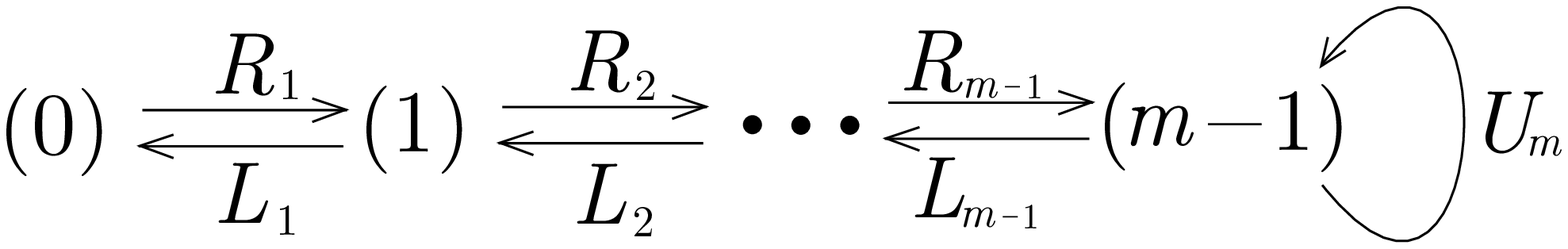}
\caption{Quiver for $\Ccl(m,1,\emptyset)$}
\label{ClQuiverFig}
\end{figure}

$\Ccl(m,1,\emptyset)$ is the algebra of the quiver shown in Figure \ref{ClQuiverFig} modulo the relations $R_i R_{i+1} = 0$ and $L_{i+1} L_i = 0$ for $1 \leq i \leq m-2$, as well as $R_{m-1} U_m = 0$ and $U_m L_{m-1} = 0$.

Since the refined grading on $\Ccl(m,1,\emptyset)$ takes values in $(\frac{1}{2}\Z)^{2m} \subset \Q^{2m}$, the following definition makes sense:
\begin{definition}
The $\Z$-graded $\Z/2\Z$-algebra $\Ccl^{bot.gr.}(m,1,\emptyset)$ is obtained by applying the homomorphism $\epsilon$ to the refined multi-gradings of $\Ccl(m,1,\emptyset)$.
\end{definition}

\begin{proposition}
The map $\phi: \Ccl^{bot.gr.}(m,1,\emptyset) \to A_{m-1} \otimes \Z/2\Z$ defined by sending $\Ib_{(i)}$ to $(i)$, $R_i$ to $(i-1|i)$, $L_i$ to $(i|i-1)$, and $U_m$ to $(m-1|m-2|m-1)$ is a surjective homomorphism of $\Z$-graded algebras.
\end{proposition}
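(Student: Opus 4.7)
The plan is to verify three things: (i) $\phi$ is well-defined as a map of ungraded algebras, i.e.\ it respects the defining relations of $\Ccl(m,1,\emptyset)$; (ii) $\phi$ preserves the $\Z$-grading obtained from $\epsilon$; (iii) $\phi$ is surjective.

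For (i), the relations to check are $R_i R_{i+1} = 0$ and $L_{i+1} L_i = 0$ for $1 \leq i \leq m-2$, together with the truncation-induced relations $R_{m-1} U_m = 0$ and $U_m L_{m-1} = 0$. The first two sets map to $(i-1|i|i+1) = 0$ and $(i+1|i|i-1) = 0$, which are the analogous defining relations of $A_{m-1}$. For the $U_m$-relations I would compute
\[
\phi(R_{m-1} U_m) = (m-2|m-1)(m-1|m-2|m-1) = (m-2|m-1|m-2)(m-2|m-1).
\]
When $m > 2$, the relation $(i|i+1|i) = (i|i-1|i)$ in $A_{m-1}$ (with $i = m-2$) converts this to $(m-2|m-3|m-2|m-1)$, which is annihilated by the relation $(m-3|m-2|m-1) = 0$. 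The case $m = 2$ is handled directly using $(0|1|0) = 0$. The computation for $U_m L_{m-1}$ is entirely symmetric.

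For (ii), I would just verify on generators: under $\epsilon$ one has $\deg R_i = \epsilon(\tfrac12\tau_i) = 0$, $\deg L_i = \epsilon(\tfrac12\beta_i) = 1$, and $\deg U_m = \epsilon(\tfrac12(\tau_m+\beta_m)) = 1$; these match $\deg(i-1|i) = 0$, $\deg(i|i-1) = 1$, and $\deg(m-1|m-2|m-1) = 0 + 1 = 1$ in $A_{m-1}$. For (iii), the idempotents $(i)$ and the generating arrows $(i-1|i)$, $(i|i-1)$ for $1 \leq i \leq m-1$ of $A_{m-1}$ are all in the image, so $\phi$ is surjective.

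The only nonroutine point is the verification of $R_{m-1} U_m = 0$ and $U_m L_{m-1} = 0$, which requires chaining together two of the $A_{m-1}$ relations (first $(i|i+1|i) = (i|i-1|i)$, then $(i-1|i|i+1) = 0$); the $m = 2$ edge case must be noted separately since the relation $(i|i+1|i) = (i|i-1|i)$ is only stated for $0 < i < m-1$.
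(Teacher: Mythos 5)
Your proof is correct and takes essentially the same route as the paper, which simply observes that the relations of $\Ccl^{bot.gr.}(m,1,\emptyset)$ hold in $A_{m-1} \otimes \Z/2\Z$, that every generator of $A_{m-1}\otimes\Z/2\Z$ is hit, and that the gradings of generators match under $\epsilon$. The only difference is that you spell out the verification of the truncation relations $R_{m-1}U_m = 0$ and $U_m L_{m-1} = 0$ (including the $m=2$ edge case), which the paper leaves implicit; your computation there is accurate.
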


\begin{proof}
This follows by comparing the generators and relations of the two algebras. The relations of $\Ccl^{bot.gr.}(m,1,\emptyset)$ are satisfied in $\A_{m-1} \otimes \Z/2\Z$, and each generator of $\A_{m-1} \otimes \Z/2\Z$ is the image of a generator of $\Ccl^{bot.gr.}(m,1,\emptyset)$ under $\phi$.

For the gradings, $R_i$ has refined multi-grading $\frac{1}{2} \tau_i$, which gets sent to $0$ by $\epsilon$; correspondingly, $(i-1|i)$ has degree $0$. $L_i$ has refined multi-grading $\frac{1}{2} \beta_i$, which gets sent to $1$ by $\epsilon$, correspondingly, $(i|i-1)$ has degree $1$.
\end{proof}

\begin{theorem}[cf. Theorem~\ref{IntroAlgThm}]\label{BodyAlgThm}
The kernel of $\phi$ is the ideal of $\Ccl^{bot.gr.}(m,1,\emptyset)$ generated by $U_1 + \cdots + U_m$. Thus, $A_{m-1} \otimes \Z/2\Z$ is isomorphic to the quotient of $\Ccl^{bot.gr.}(m,1,\emptyset)$ by this ideal.
\end{theorem}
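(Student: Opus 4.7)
The plan is to prove both inclusions of the claimed equality $\ker\phi = \langle U_1 + \cdots + U_m \rangle$. The inclusion $\langle U_1 + \cdots + U_m \rangle \subseteq \ker\phi$ follows from a direct computation: $\phi(U_i) = (i-1|i|i-1) + (i|i-1|i)$ for $1 \leq i \leq m-1$ and $\phi(U_m) = (m-1|m-2|m-1)$. Applying the Khovanov--Seidel relation $(i|i+1|i) = (i|i-1|i)$ for $0 < i < m-1$ to rewrite $(i-1|i|i-1) = (i-1|i-2|i-1)$ for $2 \leq i \leq m-1$, together with $(0|1|0) = 0$ for the boundary term in $\phi(U_1)$, one sees that each loop $(j|j-1|j)$ with $1 \leq j \leq m-1$ appears exactly twice in $\sum_{i=1}^{m} \phi(U_i)$ and thus cancels modulo $2$.

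For the reverse inclusion, let $Q := \Ccl^{bot.gr.}(m,1,\emptyset) / \langle U_1 + \cdots + U_m \rangle$ and let $\bar\phi : Q \to A_{m-1} \otimes \Z/2\Z$ be the induced surjection. The key observation is that the idempotent truncations $z_j := \Ib_{(j)} (U_1 + \cdots + U_m) \Ib_{(j)}$ all lie in the ideal, and compute to $z_0 = R_1 L_1$; $z_j = R_{j+1} L_{j+1} + L_j R_j$ for $1 \leq j \leq m-2$; and $z_{m-1} = L_{m-1} R_{m-1} + U_m$. In $Q$ one therefore has $R_1 L_1 = 0$, the ``loop identifications'' $R_{j+1} L_{j+1} = L_j R_j$, and $U_m = L_{m-1} R_{m-1}$.

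I then define $\psi : A_{m-1} \otimes \Z/2\Z \to Q$ on generators by $(i) \mapsto \Ib_{(i)}$, $(i|i+1) \mapsto R_{i+1}$, and $(i+1|i) \mapsto L_{i+1}$, and verify it respects the Khovanov--Seidel relations: the monotone relations map to $R_i R_{i+1} = 0$ and $L_{i+1} L_i = 0$, which already hold in $\Ccl^{bot.gr.}$; the loop relation $(i|i-1|i) = (i|i+1|i)$ maps to $L_i R_i = R_{i+1} L_{i+1}$, which is $z_i = 0$; and $(0|1|0) = 0$ maps to $z_0 = 0$. A check on generators gives $\bar\phi \circ \psi = \id$; conversely $\psi \circ \bar\phi = \id$ requires only the single nontrivial identity $\psi \bar\phi(U_m) = \psi((m-1|m-2|m-1)) = L_{m-1} R_{m-1} = U_m$ in $Q$, which is precisely $z_{m-1} = 0$. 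Hence $\bar\phi$ is an isomorphism and $\ker\phi = \langle U_1 + \cdots + U_m \rangle$.

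The main conceptual point, and the closest thing to an obstacle, is recognizing that the single relation $U_1 + \cdots + U_m = 0$ decomposes idempotent-wise into exactly the pieces needed to match the Khovanov--Seidel presentation: killing the short loop at vertex $0$, identifying the two local loops at each interior vertex, and absorbing the extra generator $U_m$ into the downward loop at vertex $m-1$. Once this decomposition is in hand, the construction and verification of $\psi$ is a mechanical check on generators, with only boundary-vertex bookkeeping needed.
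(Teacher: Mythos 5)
Your proof is correct, and its backbone --- the observation that multiplying $U_1+\cdots+U_m$ by each idempotent $\Ib_{(j)}$ yields $R_1L_1$, the relations $L_jR_j+R_{j+1}L_{j+1}$ for $1\le j\le m-2$, and $L_{m-1}R_{m-1}+U_m$ --- is exactly the decomposition on which the paper's proof turns. The two arguments part ways on the reverse inclusion $\ker\phi\subseteq\langle U_1+\cdots+U_m\rangle$. The paper chases elements: it reduces an arbitrary $a\in\ker\phi$ modulo $L_{m-1}R_{m-1}+U_m$ to eliminate $U_m$, regards the result as a sum of paths in the Khovanov--Seidel quiver, and argues that anything killed by $\phi$ is a combination of the Khovanov--Seidel relations, each of which pulls back into the ideal. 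You instead build an explicit inverse $\psi:A_{m-1}\otimes\Z/2\Z\to Q$ and verify $\bar\phi\circ\psi=\id$ and $\psi\circ\bar\phi=\id$ on generators. Your route is somewhat tighter: it replaces the paper's ``comparing with the relations'' step (which implicitly requires tracking how the ideal of Khovanov--Seidel relations in the free path algebra interacts with the relations already imposed in $\Ccl^{bot.gr.}(m,1,\emptyset)$) with a finite, unambiguous check that $\psi$ respects each defining relation, all of which reduce to $z_j=0$ or to relations already present in $\Ccl^{bot.gr.}(m,1,\emptyset)$. Your forward inclusion is also done slightly differently --- a direct telescoping computation of $\phi(U_1+\cdots+U_m)$ rather than checking each idempotent piece separately --- and the telescoping is right: after rewriting $(i-1|i|i-1)=(i-1|i-2|i-1)$ for $2\le i\le m-1$ and using $(0|1|0)=0$, every loop $(j|j-1|j)$ occurs exactly twice, so the sum vanishes mod $2$. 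The only point worth adding is a sentence noting that $\psi$ preserves the $\Z$-grading (both $L_{i+1}$ and $(i+1|i)$ have degree $1$, both $R_{i+1}$ and $(i|i+1)$ have degree $0$), since the theorem asserts a graded isomorphism.
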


\begin{proof}
We have
\begin{align*}
U_1 + \cdots + U_m &= 1 \cdot (U_1 + \cdots + U_m) \\
&= \bigg( \sum_{i=0}^{m-1} \Ib_{(i)} \bigg) \cdot (U_1 + \cdots + U_m) \\
&= \Ib_{(0)} \cdot U_1 + \sum_{i=1}^{m-1} \Ib_{(i)} \cdot (U_i + U_{i+1}) \\
&= R_1 L_1 + \sum_{i=1}^{m-2} (L_i R_i + R_{i+1} L_{i+1}) + (L_{m-1} R_{m-1} + U_m).
\end{align*}
Since each of the $m$ terms in this last sum have distinct left (and right) idempotents, we may multiply by each of these idempotents in turn to see that $R_1 L_1$,  $L_i R_i + R_{i+1} L_{i+1}$ ($1 \leq i \leq m-2$), and $L_{m-1} R_{m-1} + U_m$ are contained in the ideal $\langle U_1 + \cdots + U_m \rangle$.

Thus,
\begin{equation}\label{SumOfUDecomp}
\langle U_1 + \cdots + U_m \rangle = \langle R_1 L_1, L_i R_i + R_{i+1} L_{i+1}, L_{m-1} R_{m-1} + U_m \rangle_{1 \leq i \leq m-2}.
\end{equation}

Each generator of the ideal on the right is in the kernel of $\phi$. Conversely, suppose $a \in \Ccl^{bot.gr.}(m,1,\emptyset)$ is in the kernel of $\phi$. By adding an element of the ideal generated by $L_{m-1} R_{m-1} + U_m$, we may assume $a$ is a sum of products of generators $L_i$ and $R_i$, with all instances of $U_m$ removed. In this case $a$ is a sum of paths in the Khovanov-Seidel quiver which can be expressed as a sum of multiples of $(i-1|i|i+1)$, $(i+1|i|i-1)$, $(i|i+1|i) + (i|i-1|i)$, and $(0|1|0)$ for $1 \leq i \leq m-2$. Comparing with the relations defining $\Ccl^{bot.gr.}(m,1,\emptyset)$, we see that $a$ is in the ideal on the right side of Equation \ref{SumOfUDecomp}.
\end{proof}

\begin{remark}
Note that $A_{m-1} \otimes \Z/2\Z$ is finite-dimensional. Thus, the powers of $U_i$ which make $\Ccl^{bot.gr.}(m,1,\emptyset)$ infinite-dimensional must eventually be sent to zero under $\phi$. Indeed, for each $i$, $U_i^2$ is already in the ideal $\langle U_1 + \cdots + U_m \rangle$. 
\end{remark}

\subsection{DA bimodules}

Now we review Ozsv{\'a}th and Szab{\'o}'s DA bimodules $\Pc^i$ and $\Nc^i$ associated to positive and negative crossings between strands $i$ and $i+1$. Actually, we will focus on truncated versions $\Pc^i_l$ and $\Nc^i_l$, and we will only consider the case relevant to this paper ($k = 1, \Sc = \emptyset$).

Furthermore, we will define a refined multi-Alexander grading on $\Pc^i_l$ and $\Nc^i_l$ taking values in the grading group $\Q^{2m} = \Q \langle \tau_1, \ldots, \beta_m \rangle$. All generators of $\Pc^i_l$ and $\Nc^i_l$ are labeled $N$, $W$, $E$, or $S$, for ``north,'' ``west,'' ``east,'' or ``south,'' and the refined grading depends only on this label. 

\begin{figure} \centering
\includegraphics[scale=0.625]{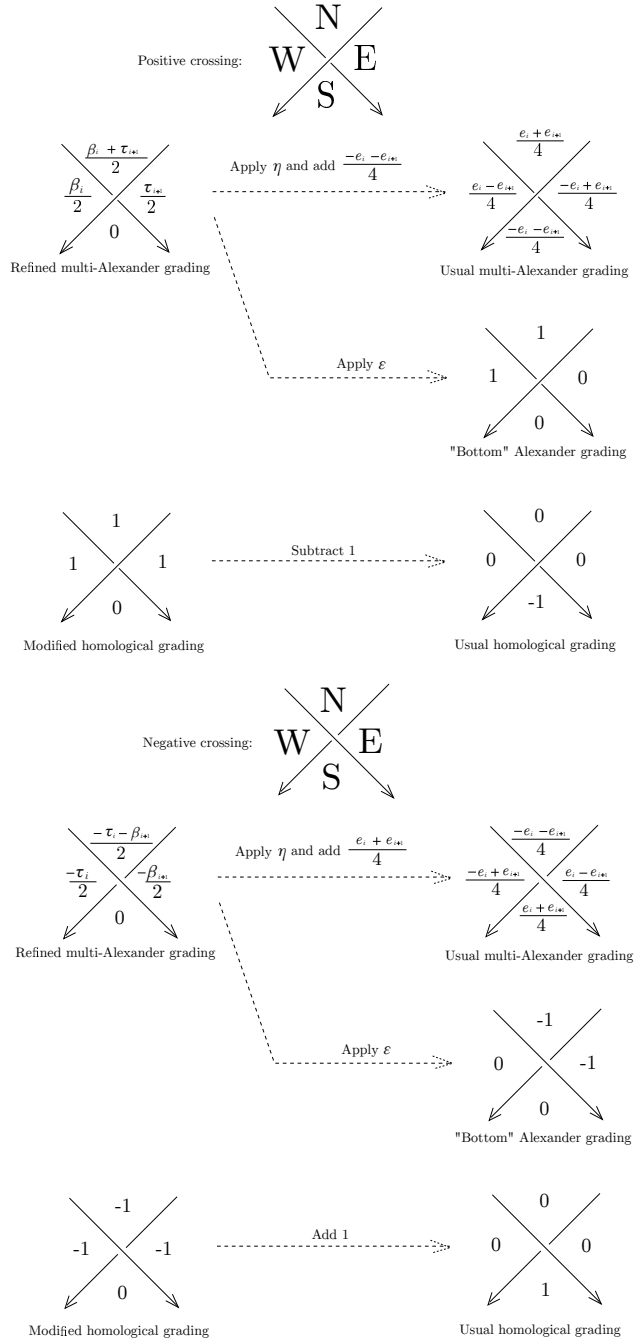}
\caption{Summary of gradings.}
\label{GradingFig}
\end{figure}

\begin{definition}\label{RefinedGradDef}
The refined multi-Alexander gradings of $N$, $W$, $E$, and $S$ are shown in Figure \ref{GradingFig}; the maps $\eta$ and $\epsilon$ are from Definition~\ref{EtaEpsilonDef}. Ozsv{\'a}th-Szab{\'o}'s multi-Alexander grading for a positive crossing is $\frac{-e_i - e_{i+1}}{4}$ plus $\eta$ of the refined grading. For a negative crossing, it is $\frac{e_i + e_{i+1}}{4}$ plus $\eta$ of the refined grading. See Section 4.3 of \cite{OSzNew}.

Figure \ref{GradingFig} also shows Ozsv{\'a}th-Szab{\'o}'s homological gradings for generators labeled $N$, $W$, $E$, and $S$, as well as a modified homological grading which we will relate to the homological grading on $\R_i$ and $\R'_i$.  For a positive crossing, the modified homological grading is the usual grading plus one; for a negative crossing, the modified homological grading is the usual grading minus one.
\end{definition}

The grading structure of $\Pc^i_l$ or $\Nc^i_l$ also includes homomorphisms from the grading groups of the ``input'' and ``output'' algebras $\Ccl(m,1,\emptyset)$ to the grading group of $\Pc^i_l$ or $\Nc^i_l$. For the output algebra, this homomorphism is the identity map on $\Q^{2m}$. For the input algebra, the homomorphism from $\Q^{2m} \to \Q^{2m}$ sends 
\begin{align*}
&\tau_i \mapsto \tau_{i+1} \\
&\beta_i \mapsto\beta_{i+1} \\
&\tau_{i+1} \mapsto \tau_i \\
&\beta_{i+1} \mapsto \beta_i
\end{align*}
and is the identity on all other $\tau_j$ and $\beta_j$. 

Ozsv{\'a}th-Szab{\'o}'s multi-Alexander grading structure may be phrased as follows: $\Pc^i$ and $\Nc^i$ are graded by $\Q^m$, and the homomorphism $\Q^m \to \Q^m$ for the output algebra is the identity. The homomorphism $\Q^m \to \Q^m$ for the input algebra sends $e_i$ to $e_{i+1}$ and vice-versa, and is the identity on all other $e_j$ (this is the homomorphism denoted $\tau_i^{\mathbf{gr}}$ in \cite{OSzNew}; see Equation (5.1) in particular). The squares involving these homomorphisms of grading groups and the map $\eta$ are commutative; thus, the refined grading structure is compatible with Ozsv{\'a}th-Szab{\'o}'s grading structure.

\begin{figure} \centering
\includegraphics[scale=0.625]{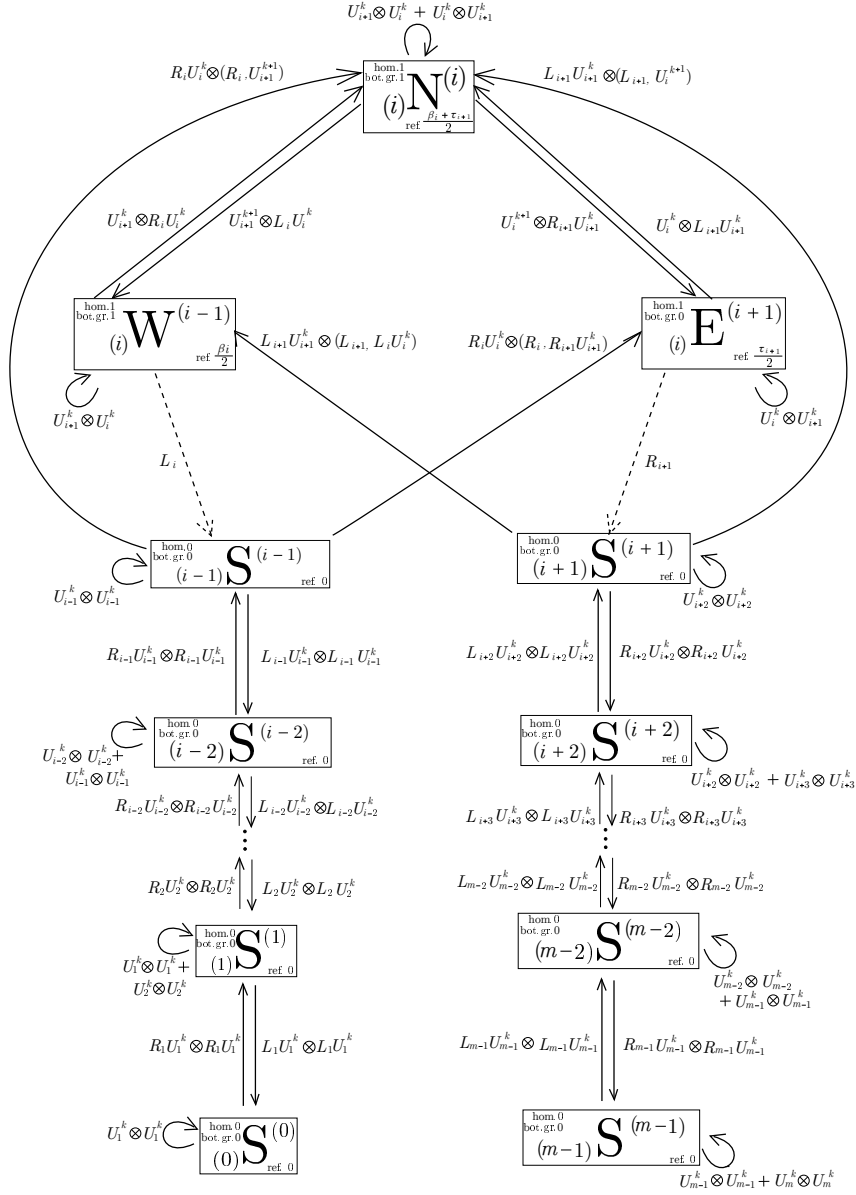}
\caption{The DA bimodule $\Pc^i_l$ for $1 \leq i < m-1$, with grading conventions modified as in Figure~\ref{GradingFig}. We allow all $k \geq 0$, except when the resulting label is $1 \otimes 1$ (each generator should have one self-arrow with this label which we omit from the diagram).}
\label{PiGraphFig}
\end{figure}

\begin{figure} \centering
\includegraphics[scale=0.625]{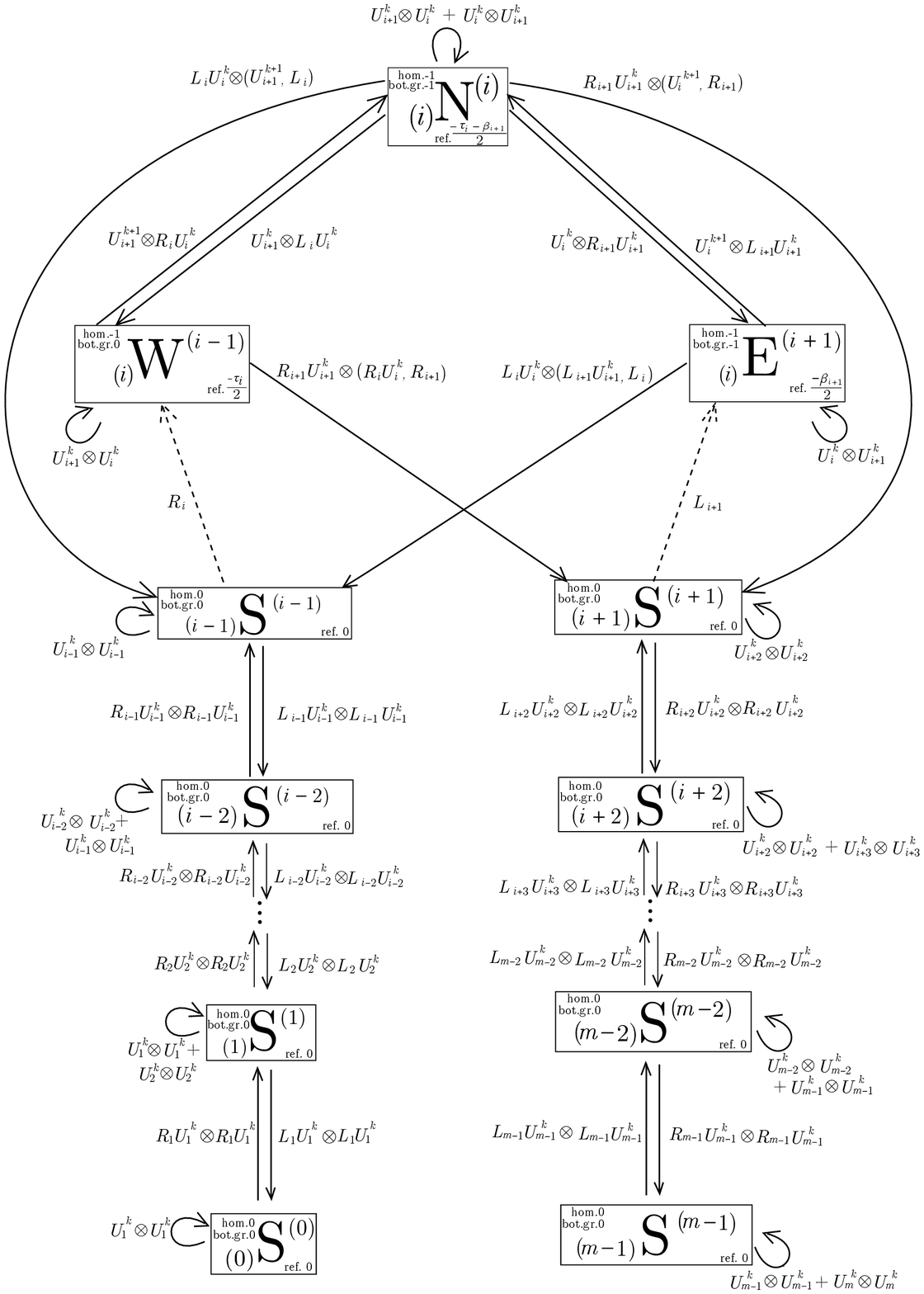}
\caption{The DA bimodule $\Nc^i_l$ for $1 \leq i < m-1$, with grading conventions modified as in Figure~\ref{GradingFig}. The restrictions on $k$ are the same as in Figure~\ref{PiGraphFig}.}
\label{NiGraphFig}
\end{figure}

\begin{definition}[cf. the beginning of Section 5 and Sections 5.1, 5.2 of \cite{OSzNew}, especially Definition 5.7]
Let $1 \leq i < m-1$. The DA bimodule $\Pc^i_l$ is shown in Figure~\ref{PiGraphFig}. The modified homological gradings are displayed in the upper left corner of each DA bimodule generator, and the refined gradings are displayed in the lower right corner. Also shown are the ``bottom gradings'' which come from applying $\epsilon$ to the refined gradings. 
\end{definition}

\begin{definition}[cf. Section 5.5 of \cite{OSzNew}]
Let $1 \leq i < m-1$. The DA bimodule $\Nc^i_l$ is shown in Figure~\ref{NiGraphFig}, with the same grading data as in Figure~\ref{PiGraphFig}. Figure~\ref{NiGraphFig} is obtained from Figure~\ref{PiGraphFig} by reversing all arrows, replacing each $R_j$ with $L_j$ and vice-versa, and for each $\delta^1_3$ arrow, reversing the order of the two inputs. 
\end{definition}

\begin{proposition}
The DA bimodule operations on $\Pc^i_l$ and $\Nc^i_l$ respect the refined multi-Alexander grading.
\end{proposition}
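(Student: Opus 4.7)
The proof is a direct case-by-case verification that every $\delta^1_j$ arrow in Figures \ref{PiGraphFig} and \ref{NiGraphFig} preserves the refined multi-Alexander grading. I would first set up the balance equation: let $f_{\mathrm{in}}\colon \Q^{2m} \to \Q^{2m}$ denote the input-algebra homomorphism that swaps $\tau_i \leftrightarrow \tau_{i+1}$ and $\beta_i \leftrightarrow \beta_{i+1}$, the output homomorphism being the identity. For a $\delta^1_j$ arrow from $x$ to $y$ with label $a \otimes (b_1, \ldots, b_{j-1})$, the condition to verify is
$$\deg(x) + \sum_{k=1}^{j-1} f_{\mathrm{in}}(\deg b_k) = \deg(a) + \deg(y),$$
with generator gradings read from Figure \ref{GradingFig} and algebra gradings $\tfrac{1}{2}\tau_j$ for $R_j$ and $\tfrac{1}{2}\beta_j$ for $L_j$.

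Next I would reduce to a finite list of basic arrows. Many arrows in Figure \ref{PiGraphFig} are parameterized by an integer $k \geq 0$ counting repetitions of $U_j$-type subwords. Since $U_j$ has refined grading $\tfrac{1}{2}(\tau_j + \beta_j)$, which is either fixed by $f_{\mathrm{in}}$ (when $j \notin \{i,i+1\}$) or permuted with $\tfrac{1}{2}(\tau_{i\pm1} + \beta_{i\pm1})$ in a way that pairs with the $U$-power appearing on the output side, incrementing $k$ contributes equal multi-grading shifts to both sides of the balance equation. It therefore suffices to verify the equation at the minimal values of $k$, which leaves only finitely many arrows between the $N, W, E, S$ generators to check by direct substitution of values read from Figure \ref{GradingFig}.

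For $\Nc^i_l$ I would invoke the definition-level symmetry: $\Nc^i_l$ is obtained from $\Pc^i_l$ by reversing each arrow, swapping $R_j \leftrightarrow L_j$ on all labels, and reversing the input order in $\delta^1_3$. The $R \leftrightarrow L$ swap corresponds to swapping $\tau_j \leftrightarrow \beta_j$ on algebra gradings, and Figure \ref{GradingFig} shows that the $N, W, E, S$ generator gradings for a negative crossing are the $\tau \leftrightarrow \beta$ reflections of those for a positive crossing. Since the balance equation is invariant under simultaneously reversing a chain of arrows and applying the global $\tau \leftrightarrow \beta$ swap, the verification for $\Pc^i_l$ immediately implies the verification for $\Nc^i_l$.

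The main obstacle is really just bookkeeping: the swap $f_{\mathrm{in}}$ acts precisely on the algebra elements $R_i, L_i, R_{i+1}, L_{i+1}$ that decorate most of the arrows between the $N, W, E, S$ generators, so one must consistently apply it before comparing to the output-side gradings, and the modified homological shift of Definition \ref{RefinedGradDef} has to be tracked separately from the multi-grading. Once these conventions are pinned down, each individual arrow check reduces to an elementary computation in $(\tfrac{1}{2}\Z)^{2m}$.
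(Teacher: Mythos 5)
Your scheme for $\Pc^i_l$ --- the balance equation $\deg(x)+\sum_k f_{\mathrm{in}}(\deg b_k)=\deg(a)+\deg(y)$, checked arrow by arrow after reducing the $k$-parameterized families to their minimal members --- is exactly the inspection the paper performs, and that half is fine. The gap is in how you dispose of $\Nc^i_l$. The balance equation is \emph{not} invariant under reversing an arrow and applying the global swap $\sigma\colon \tau_j\leftrightarrow\beta_j$. If the arrow $x\to y$ of $\Pc^i_l$ with label $a\otimes(b_1,\dots,b_{j-1})$ satisfies the equation above, the reversed arrow $y\to x$ of $\Nc^i_l$, with $R\leftrightarrow L$-swapped label, requires $\deg'(y)+\sum_k f_{\mathrm{in}}(\sigma\,\deg b_k)=\sigma(\deg a)+\deg'(x)$. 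If, as you assert, $\deg'=\sigma\circ\deg$ on the generators, then since $\sigma$ commutes with $f_{\mathrm{in}}$ this is equivalent to $\deg(y)+\sum_k f_{\mathrm{in}}(\deg b_k)=\deg(a)+\deg(x)$, which together with the original equation forces $\deg(x)=\deg(y)$. That fails for many arrows of $\Pc^i_l$: for instance the $\delta^1_1$ arrow from $W$ to the adjacent $S$-type generator (the dotted arrow whose image is labeled $(i|i-1)$ in Figure~\ref{IndPiGraphFig}), and the $\delta^1_2$ arrows from ${_{(i-1)}}S^{(i-1)}$ to $W$ and $N$ that make the correction terms $h_2$ in Theorem~\ref{BodyBimodThmP} necessary, all join generators of bottom grading $0$ to generators of bottom grading $1$, so their refined gradings certainly differ. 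Hence, as written, the verification for $\Pc^i_l$ does not imply the one for $\Nc^i_l$.

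The underlying misreading is of the relation between the positive- and negative-crossing columns of Figure~\ref{GradingFig}: reversing an arrow negates the difference of generator gradings across it, so the symmetry that actually holds is $\deg_{\Nc^i_l}=-\sigma(\deg_{\Pc^i_l})$ (reflection \emph{and} negation, up to an overall constant), not $\deg_{\Nc^i_l}=\sigma(\deg_{\Pc^i_l})$. This is what the bottom gradings forced by the grading-preserving maps in Theorems~\ref{BodyBimodThmP} and~\ref{BodyBimodThmN} show: $(N,W,E,S)$ have bottom gradings $(1,1,0,0)$ for the positive crossing and $(-1,0,-1,0)$ for the negative one. With the negation inserted, your transport argument does become valid --- applying $\sigma$ to the $\Pc^i_l$ balance equation for an arrow yields verbatim the $\Nc^i_l$ balance equation for the reversed, relabeled arrow --- or one can simply inspect the arrows of Figure~\ref{NiGraphFig} directly, which is all the paper does.
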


\begin{proof}
This follows from inspection of each arrow in Figure~\ref{PiGraphFig} and Figure~\ref{NiGraphFig}.
\end{proof}

\begin{figure} \centering
\includegraphics[scale=0.625]{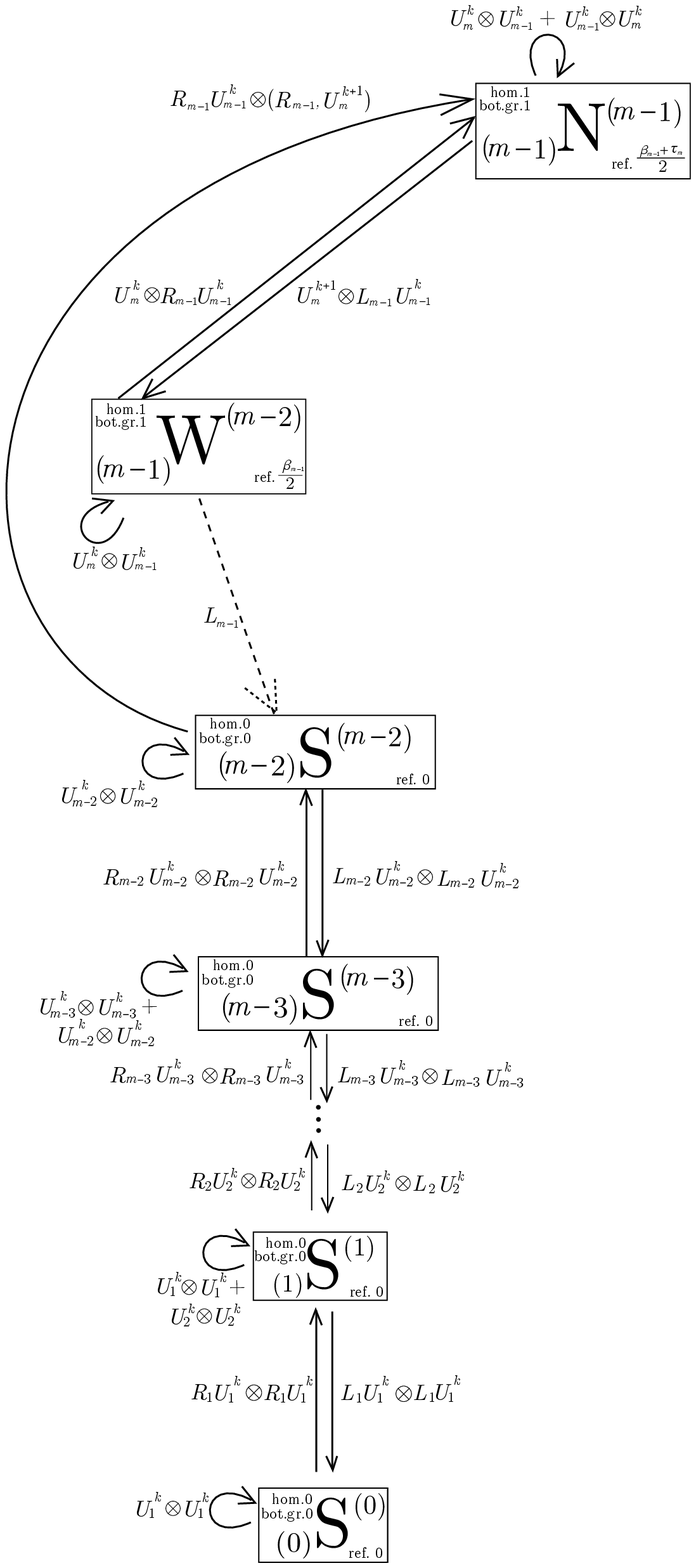}
\caption{The DA bimodule $\Pc^{m-1}_l$, with grading conventions modified from \cite{OSzNew}. We have simply deleted some vertices and their adjacent arrows from Figure~\ref{PiGraphFig}. The bimodule $\Nc^{m-1}_l$ is obtained similarly from Figure \ref{NiGraphFig}.}
\label{SpecialPiGraphFig}
\end{figure}

As with Khovanov-Seidel's construction, the case $i = m-1$ is special. The DA bimodule $\Pc^{m-1}_l$ is shown in Figure \ref{SpecialPiGraphFig}. As before, this diagram is obtained from Figure \ref{PiGraphFig} by deleting some generators and their adjacent arrows. We omit a figure for $\Nc^{m-1}_l$.

\section{Equivalences between bimodules}

\subsection{Restriction and induction of scalars}

\begin{figure} \centering
\includegraphics[scale=0.625]{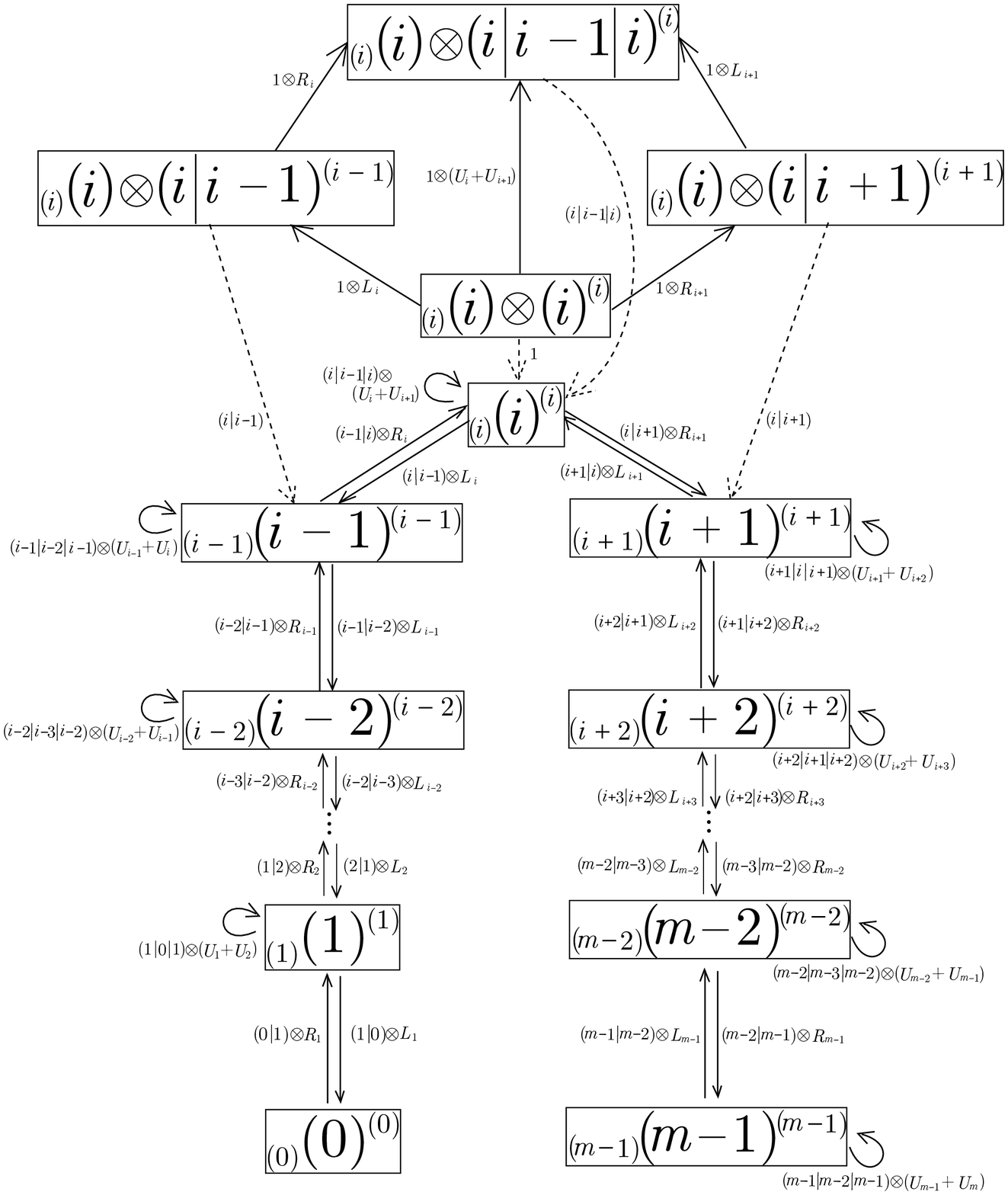}
\caption{The DA bimodule $\Rest_{\phi} \R_i$ for $1 \leq i < m-1$.}
\label{RestRiGraphFig}
\end{figure}

\begin{figure} \centering
\includegraphics[scale=0.625]{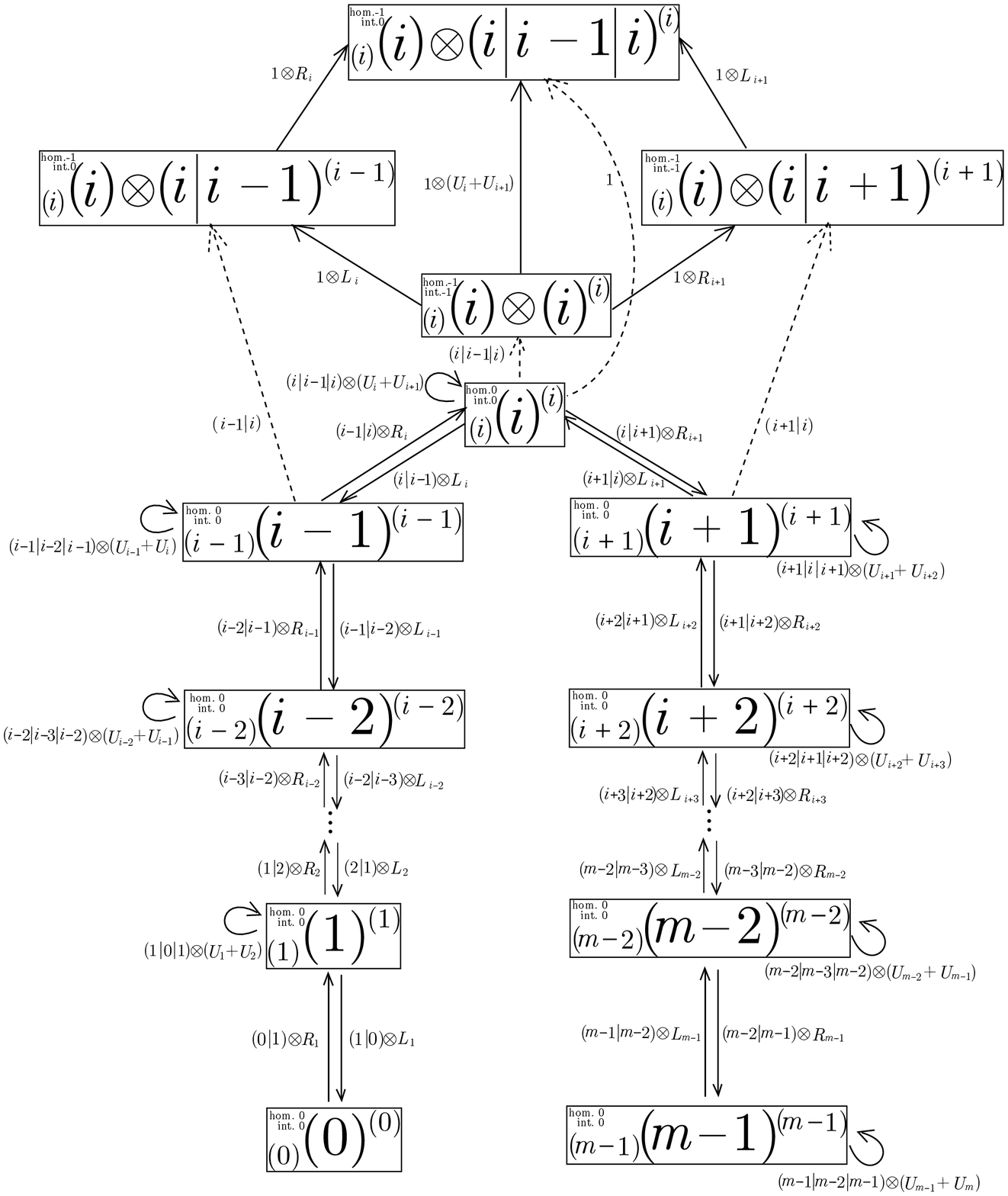}
\caption{The DA bimodule $\Rest_{\phi} \R'_i$ for $1 \leq i < m-1$.}
\label{RestRPrimeiGraphFig}
\end{figure}

Using the map $\phi$, we may obtain DA bimodules over $(A_{m-1} \otimes \Z/2\Z,\Ccl^{bot.gr.}(m,1,\emptyset))$ from the DA bimodules $\R_i$ and $\R'_i$ of Definitions \ref{KSDADef} and \ref{KSDAPrimeDef} by restriction of scalars (see Section 2.4.2 of \cite{LOTBimod}). Following the notation of \cite{LOTBimod},  we call these bimodules $\Rest_{\phi} \R_i$ and $\Rest_{\phi} \R'_i$. In Figure \ref{RestRiGraphFig}, we depict $\Rest_{\phi} \R_i$ in the notation of Figure \ref{RiGraphFig}. Figure \ref{RestRPrimeiGraphFig} shows $\Rest_{\phi} \R'_i$.

\begin{figure} \centering
\includegraphics[scale=0.625]{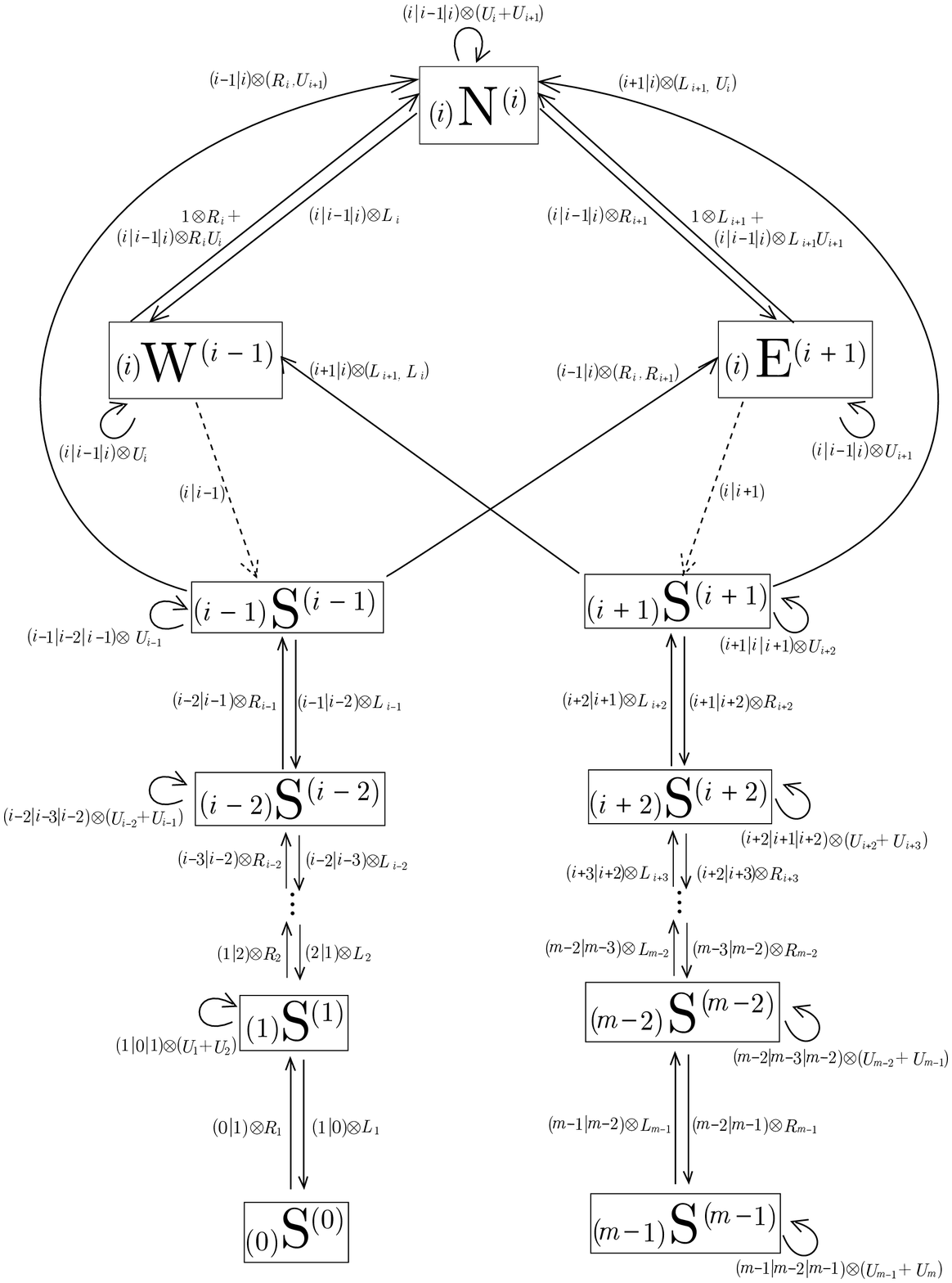}
\caption{The DA bimodule ${^{\phi}}\Induct \Pc^i_l$ for $1 \leq i < m-1$.}
\label{IndPiGraphFig}
\end{figure}

\begin{figure} \centering
\includegraphics[scale=0.625]{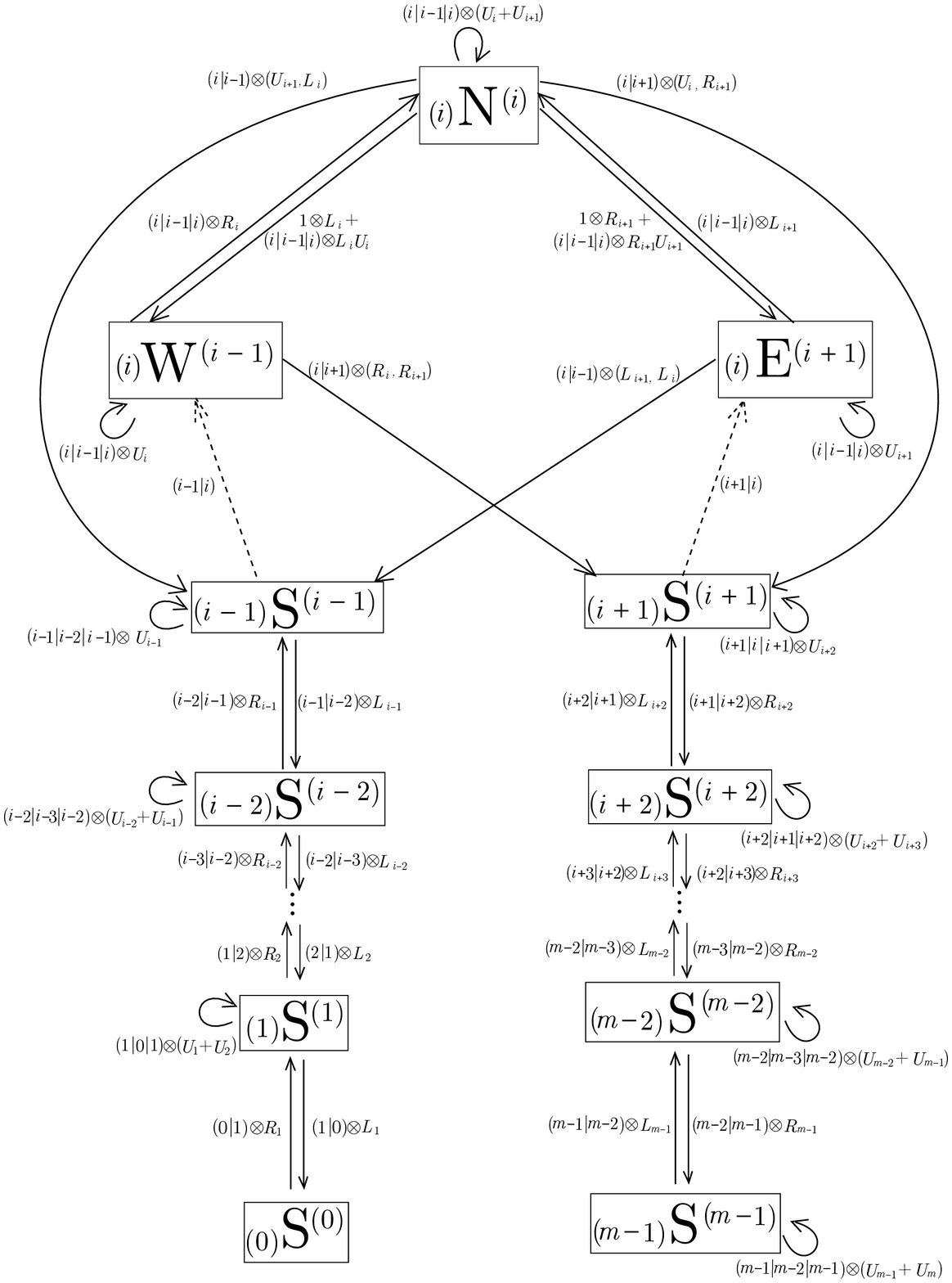}
\caption{The DA bimodule ${^{\phi}}\Induct \Nc^i_l$ for $1 \leq i < m-1$.}
\label{NegIndPiGraphFig}
\end{figure}

We may also obtain DA bimodules over $(A_{m-1} \otimes \Z/2\Z, \Ccl^{bot.gr.}(m,1,\emptyset))$ from the DA bimodules $\Pc^i_l$ and $\Nc^i_l$ by induction of scalars. Following the notation of \cite{LOTBimod}, Section 2.4.2, we denote these bimodules by ${^{\phi}}\Induct \Pc^i_l$ and ${^{\phi}}\Induct \Nc^i_l$. Figure \ref{IndPiGraphFig} shows ${^{\phi}}\Induct \Pc^i_l$, and Figure \ref{NegIndPiGraphFig} shows ${^{\phi}}\Induct \Nc^i_l$. 

\subsection{A homotopy equivalence}
Here we show that $\Rest_{\phi} \R_i$ and ${^{\phi}}\Induct \Pc^i_l$, as well as $\Rest_{\phi} \R'_i$ and ${^{\phi}}\Induct \Nc^i_l$, are homotopy equivalent as DA bimodules.

\begin{figure} \centering
\includegraphics[scale=0.625]{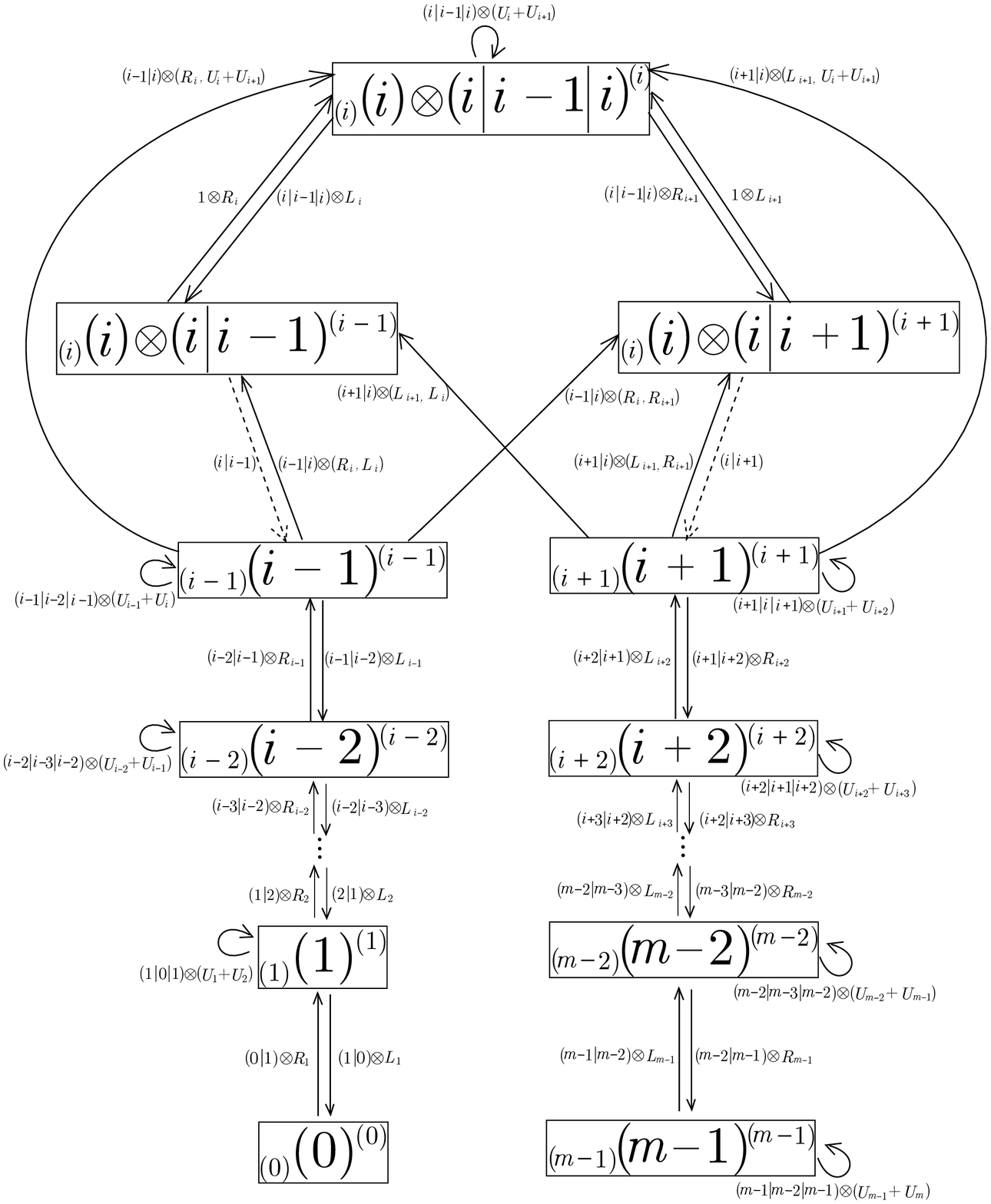}
\caption{A DA bimodule homotopy equivalent to $\Rest_{\phi} \R_i$ for $1 \leq i < m-1$.}
\label{HtpyRestRiGraphFig}
\end{figure}

\begin{figure} \centering
\includegraphics[scale=0.625]{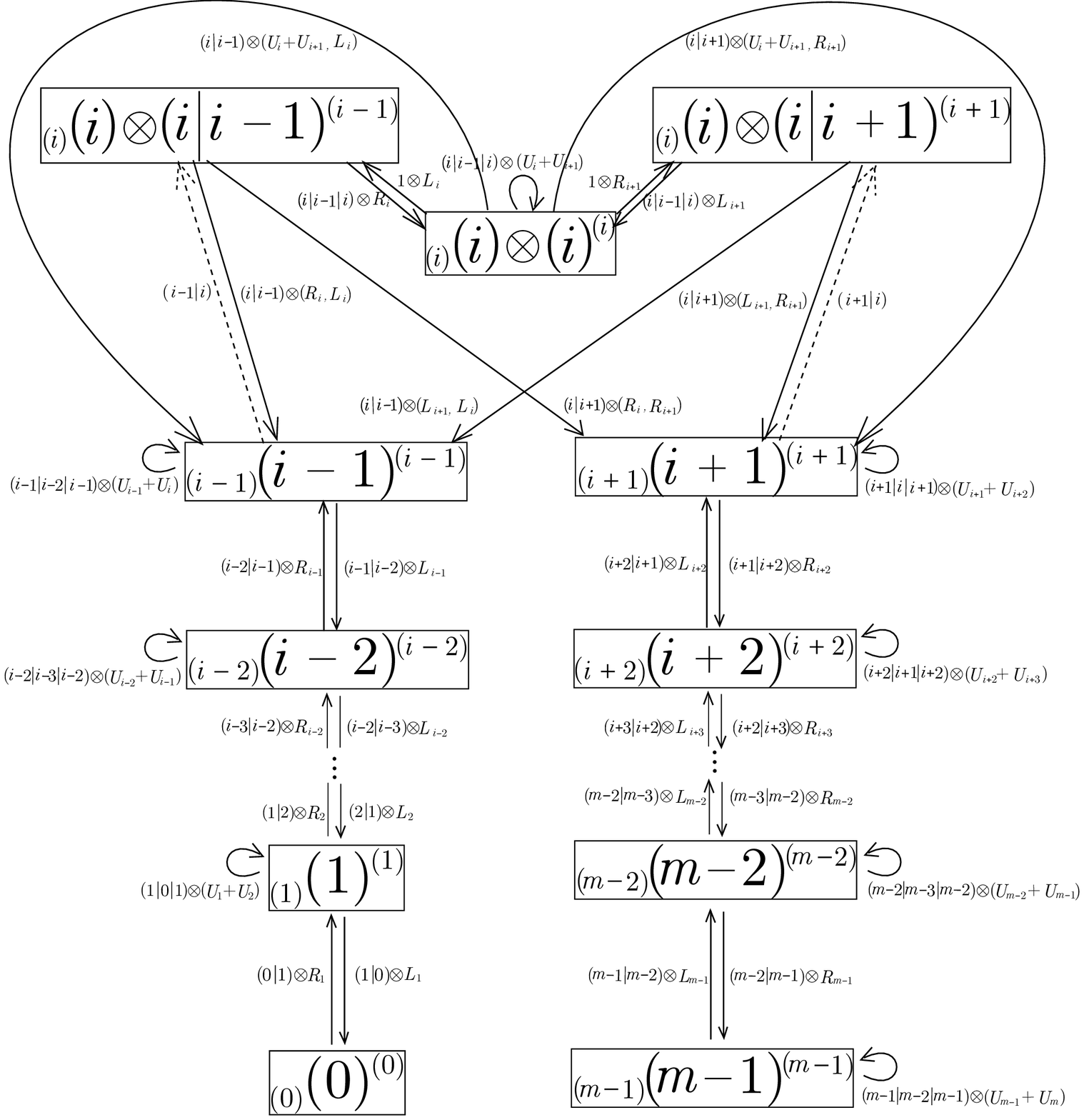}
\caption{A DA bimodule homotopy equivalent to $\Rest_{\phi} \R'_i$ for $1 \leq i < m-1$.}
\label{HtpyRestRPrimeiGraphFig}
\end{figure}

\begin{lemma}\label{HtpyLemma} For $1 \leq i < m-1$, $\Rest_{\phi} \R_i$ is homotopy equivalent to the DA bimodule shown in Figure \ref{HtpyRestRiGraphFig}.
\end{lemma}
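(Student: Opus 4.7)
The plan is to prove the homotopy equivalence by a straightforward application of the cancellation lemma (homological perturbation) for DA bimodules, as in Section 2.3 of \cite{LOTBimod}. Looking at the diagram for $\Rest_\phi \R_i$ in Figure~\ref{RestRiGraphFig}, the natural candidate for a cancellable edge is the $\delta^1_1$ arrow
\[
\langle (i) \otimes (i) \rangle \;\longrightarrow\; \langle (i) \rangle
\]
labeled by $(i) \in A_{m-1} \otimes \Z/2\Z$. Since $(i)$ is an idempotent and both source and target live in the $(i)$-idempotent component on the left, this arrow is an identity-type arrow and the pair is cancellable.

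First I would invoke the cancellation lemma to delete the two generators $\langle (i)\otimes(i)\rangle$ and $\langle (i)\rangle$ and produce a homotopy equivalent DA bimodule on the surviving generators $\{\langle (j)\rangle : j\neq i\} \cup \{\langle (i)\otimes (i|i{-}1)\rangle, \langle (i)\otimes(i|i{+}1)\rangle, \langle(i)\otimes(i|i{-}1|i)\rangle\}$. Each new operation is obtained by summing, over all trees in $\Rest_\phi\R_i$ passing through the cancelled edge, the composition of an incoming arrow to $\langle(i)\otimes(i)\rangle$ or $\langle(i)\rangle$ with an outgoing arrow from the other side of the edge. Since the only non-trivial arrows into $\langle(i)\otimes(i)\rangle$ come from $\delta^1_2$ with the right input an idempotent (and yield only the omitted self-arrow), the bookkeeping reduces to pairing:
\begin{itemize}
\item the $\delta^1_1$ arrow $\langle (i)\otimes(i|i{-}1|i)\rangle \xrightarrow{(i|i{-}1|i)} \langle (i)\rangle$, and
\item all $\delta^1_2$ arrows $\langle (j)\rangle \xrightarrow{\phi(\tilde b)\otimes \tilde b} \langle (i)\rangle$ coming from right-multiplication by elements $\tilde b \in \Ib_{(j)}\cdot\Ccl^{bot.gr.}(m,1,\emptyset)\cdot\Ib_{(i)}$,
\end{itemize}
with each $\delta^1_2$ arrow $\langle (i)\rangle \xrightarrow{\phi(\tilde c)\otimes \tilde c} \langle (k)\rangle$ out of $\langle(i)\rangle$.

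Second, I would enumerate the finitely many resulting operations and simplify them using the defining relations of $A_{m-1}\otimes\Z/2\Z$ (e.g.\ $(i\pm 1|i|i\pm 1)=0$, $(i|i{+}1|i) = (i|i{-}1|i)$, nilpotence of the length-two loops) and of $\Ccl^{bot.gr.}(m,1,\emptyset)$ (the $R_jR_{j+1}=0$ and $L_{j+1}L_j=0$ relations together with the centrality of $U_j$). Matching the resulting list of $\delta^1_j$ operations term by term with the arrows in Figure~\ref{HtpyRestRiGraphFig} will finish the proof.

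The main obstacle is bookkeeping: tracking which zig-zags through the cancelled edge survive after restriction and checking whether any $\delta^1_j$ with $j\geq 3$ is forced to appear. I expect that the $\delta^1_3$ operations visible in the target figure (compare with the higher actions appearing in $\Pc^i_l$) arise precisely from the pairing of $\delta^1_1\langle(i)\otimes (i|i{-}1|i)\rangle$ and a subsequent $\delta^1_2$ arrow out of $\langle(i)\rangle$, while the remaining pairings produce only $\delta^1_1$ and $\delta^1_2$ operations. Once the enumeration is carried out, the DA compatibility equations are automatic from the cancellation lemma, so the only real content is matching the combinatorial picture with Figure~\ref{HtpyRestRiGraphFig}.
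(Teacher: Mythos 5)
Your overall strategy is the same as the paper's: both cancel the idempotent-labelled $\delta^1_1$ arrow from $\langle(i)\otimes(i)\rangle$ to $\langle(i)\rangle$ and read off the induced operations by homological perturbation (the paper implements this via Lemma 2.12 of \cite{OSzNew}, with explicit maps $f$, $g$ and a homotopy $T$ sending $\langle(i)\rangle$ to $\langle(i)\otimes(i)\rangle$). However, your description of which compositions survive is wrong in a way that would derail the computation. In Gaussian elimination the new operations come from zig-zags that enter the \emph{target} $\langle(i)\rangle$ of the cancelled arrow, travel backwards along it (via $T$) to the \emph{source} $\langle(i)\otimes(i)\rangle$, and then exit through an operation out of $\langle(i)\otimes(i)\rangle$ --- that is, through a solid arrow labelled $1\otimes\tilde c$ landing in the top row on $\langle(i)\otimes\phi(\tilde c)\rangle$. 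You instead pair the arrows into $\langle(i)\rangle$ with the $\delta^1_2$ arrows \emph{out of} $\langle(i)\rangle$ into other bottom-row generators $\langle(k)\rangle$; that is an ordinary composition through the generator $\langle(i)\rangle$, not a passage backwards through the cancelled edge, and it produces operations with the wrong targets, so the term-by-term match with Figure~\ref{HtpyRestRiGraphFig} that you propose as the final step would fail.

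Concretely, the six new $\delta^1_3$ operations in Figure~\ref{HtpyRestRiGraphFig} arise from a $\delta^1_2$ arrow $\langle(j)\rangle\to\langle(i)\rangle$, followed by $T$, followed by a $\delta^1_2$ arrow $\langle(i)\otimes(i)\rangle\to\langle(i)\otimes\phi(\tilde c)\rangle$ --- two algebra inputs in total --- while pairing the $\delta^1_1$ arrow $\langle(i)\otimes(i|i{-}1|i)\rangle\to\langle(i)\rangle$ with arrows out of $\langle(i)\otimes(i)\rangle$ contributes only new $\delta^1_2$'s (these correspond to the ``extra'' term of $f$ in the paper). Your expectation that the $\delta^1_3$'s come from that $\delta^1_1$ followed by a single $\delta^1_2$ is impossible on arity grounds alone: composing a $\delta^1_1$ with a $\delta^1_2$ yields one algebra input, hence a $\delta^1_2$. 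Finally, your remark that the only arrow into $\langle(i)\otimes(i)\rangle$ is the omitted self-arrow is the right kind of observation, but what you actually need is that no operation out of $\langle(i)\otimes(i)\rangle$ other than the cancelled one lands back on $\langle(i)\rangle$; this is what excludes zig-zags using $T$ more than once. With the compositions corrected, the rest of your outline (explicit enumeration, automatic compatibility from the cancellation lemma) goes through and reproduces the paper's argument.
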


\begin{proof}
We use Lemma 2.12 of \cite{OSzNew}. The DA bimodule $\Rest_{\phi} \R_i$ is strictly unital, because $\delta^1_2(\cdot,1) = \id$ and there are no $\delta^1_j$ actions for $j > 2$. As a Type D structure, $\Rest_{\phi} \R_i$ may be described by forgetting the solid arrows in Figure~\ref{RestRiGraphFig}, resulting in a greatly simplified diagram. The Type D structure $Z$ in Ozsv{\'a}th-Szab{\'o}'s Lemma 2.12 is obtained by discarding the generators $\langle (i) \otimes (i) \rangle$ and $\langle (i) \rangle$ in Figure~\ref{RestRiGraphFig} (as in Section~\ref{DAReformSect}, we use angle brackets for DA bimodule generators to differentiate them from algebra elements). The homomorphism $g$ in Lemma 2.12 is the projection from $\Rest_{\phi} \R_i$ onto the remaining generators. 

The homomorphism $f$ is the inclusion into $\Rest_{\phi} \R_i$ on all generators of $Z$ except $\langle (i) \otimes (i|i-1|i) \rangle$ . We define
\[
f\langle (i) \otimes (i|i-1|i) \rangle  = 1 \otimes \langle (i) \otimes (i|i-1|i) \rangle + (i|i-1|i) \otimes \langle (i) \otimes (i) \rangle.
\]
Then $f$ is a valid homomorphism of Type D structures and preserves the bigradings. We have $g \circ f = \id_Z$. Also, $f \circ g = \id_{\Rest_{\phi} \R_i} + dT$, where 
\[
T: \Rest_{\phi} \R_i \to \Rest_{\phi} \R_i
\]
sends $\langle (i) \rangle$ to $\langle (i) \otimes (i) \rangle$ and sends all other generators to zero. Since $T^2 = 0$, all conditions of Lemma 2.12 are satisfied.

We conclude that $\Rest_{\phi} \R_i$ is homotopy equivalent, as a DA bimodule, to a DA structure on $Z$ which is described explicitly in the proof of Lemma 2.12 of \cite{OSzNew}. In terms of the graphical representation, we look for patterns of arrows
\[
x \rightarrow f(x) \xrightarrow{\alpha} y \rightarrow g(y),
\]
\[
x \rightarrow f(x) \xrightarrow{\alpha_1} \langle (i) \rangle \xrightarrow{T} \langle (i) \otimes (i) \rangle \xrightarrow{\alpha_2} y \rightarrow g(y),
\]
etc. in Figure~\ref{RestRiGraphFig} (only these two patterns actually occur). We require $\alpha_1$ and $\alpha_2$ to be solid arrows representing $\delta^1_2$ operations ($\Rest_{\phi} \R_i$ has no $\delta^1_j$ operations for $j > 2$); $\alpha$ may represent any type of DA operation.

The first arrow pattern gives us all arrows of Figure \ref{RestRiGraphFig} which do not involve vertices $\langle (i) \otimes (i) \rangle$ or $\langle (i) \rangle$, as well as three additional solid arrows representing $\delta^1_2$ operations and coming from the ``extra'' term of $f$ which is not just the inclusion of generators of $Z$ into $\Rest_{\phi} \R_i$. The second pattern gives us six additional solid arrows which represent $\delta^1_3$ operations. The resulting diagram is Figure~\ref{HtpyRestRiGraphFig}, proving the lemma.
\end{proof}

\begin{remark}
A similar statement holds when $i = m-1$, with the same proof. To save space, we will only give the details for the generic case $i < m-1$.
\end{remark}

Analogously, we have:
\begin{lemma}\label{PrimeHtpyLemma} For $1 \leq i < m-1$, $\Rest_{\phi} \R'_i$ is homotopy equivalent to the DA bimodule shown in Figure \ref{HtpyRestRPrimeiGraphFig}.
\end{lemma}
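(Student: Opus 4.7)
The plan is to apply Lemma 2.12 of \cite{OSzNew} in a manner closely parallel to the proof of Lemma \ref{HtpyLemma}. The crucial observation is that the formula for $\delta^1_1 \langle (i) \rangle$ in Definition \ref{KSDAPrimeDef} contains the summand $1 \otimes \langle (i) \otimes (i|i-1|i) \rangle$; this is a $\delta^1_1$-arrow from $\langle (i) \rangle$ to $\langle (i) \otimes (i|i-1|i) \rangle$ labeled by the identity of $A_{m-1} \otimes \Z/2\Z$, and restriction of scalars along $\phi$ preserves it. Such an identity-labeled arrow is precisely the data Lemma 2.12 needs to cancel a pair of generators.

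I would take $Z$ to be the Type D structure obtained from $\Rest_{\phi} \R'_i$ by discarding $\langle (i) \rangle$ and $\langle (i) \otimes (i|i-1|i) \rangle$, let $g$ be the projection onto $Z$, and define a homotopy
\[
T: \Rest_{\phi} \R'_i \to \Rest_{\phi} \R'_i
\]
by $T\langle (i) \otimes (i|i-1|i) \rangle = \langle (i) \rangle$ and zero on all other generators; then $T^2 = 0$ is immediate. The homomorphism $f$ is the inclusion of $Z$ into $\Rest_{\phi} \R'_i$, modified on the generator $\langle (i) \otimes (i) \rangle$ by a correction term designed to absorb the second summand $(i|i-1|i) \otimes \langle (i) \otimes (i) \rangle$ of $\delta^1_1 \langle (i) \rangle$. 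This is directly parallel to the extra term added to $f\langle (i) \otimes (i|i-1|i) \rangle$ in Lemma \ref{HtpyLemma}. Checking that $f$ is a Type D homomorphism, that $g \circ f = \id_Z$, and that $f \circ g = \id_{\Rest_{\phi} \R'_i} + dT$ is then a routine calculation analogous to the one carried out there.

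The induced DA structure on $Z$ is read off by enumerating the same two graphical patterns used in Lemma \ref{HtpyLemma}: direct paths $x \to f(x) \xrightarrow{\alpha} y \to g(y)$, and detours $x \to f(x) \xrightarrow{\alpha_1} \langle (i) \otimes (i|i-1|i) \rangle \xrightarrow{T} \langle (i) \rangle \xrightarrow{\alpha_2} y \to g(y)$, where the $\alpha_j$ represent $\delta^1_2$ operations (since $\Rest_{\phi} \R'_i$ has no higher DA actions) and $\alpha$ represents either a $\delta^1_1$ or a $\delta^1_2$ operation. The first pattern reproduces the arrows of Figure \ref{RestRPrimeiGraphFig} not incident to the canceled vertices, together with some additional $\delta^1_2$-arrows induced by the correction term in $f$; the second pattern contributes the new $\delta^1_3$-arrows of the simplified bimodule. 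The main obstacle is verifying that the resulting diagram matches Figure \ref{HtpyRestRPrimeiGraphFig} on the nose: here the canceling $\delta^1_1$-arrow runs from an idempotent generator to a tensor-product generator, reversing the direction used in the $\R_i$ case, so the correction term in $f$ attaches to a different generator and the precise list of new higher-action arrows differs from the positive-crossing setup. The check is essentially mechanical, but must be carried out enumeration by enumeration to confirm exact agreement with Figure \ref{HtpyRestRPrimeiGraphFig}.
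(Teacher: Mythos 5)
Your overall strategy is the right one and matches the paper's: cancel the pair $\langle (i) \rangle$, $\langle (i) \otimes (i|i-1|i) \rangle$ along the identity-labeled $\delta^1_1$-arrow using Lemma 2.12 of \cite{OSzNew}, with the same homotopy $T$, and then enumerate the two zig-zag patterns. However, you have placed the correction term on the wrong map, and this is not a cosmetic issue. In the positive-crossing case the non-identity arrow adjacent to the cancelled pair points \emph{into} the cancelled target $\langle (i) \rangle$ (namely $(i|i-1|i) \otimes \langle (i) \rangle$ coming out of $\langle (i) \otimes (i|i-1|i) \rangle$), which is why $f$ acquires an extra term there. In the present case the roles are reversed: the cancelling arrow runs $\langle (i) \rangle \to \langle (i) \otimes (i|i-1|i) \rangle$, no generator of $Z'$ has a $\delta^1_1$-component into the cancelled target, and the leftover summand $(i|i-1|i) \otimes \langle (i) \otimes (i) \rangle$ of $\delta^1_1\langle (i) \rangle$ is an arrow \emph{out of} the cancelled source. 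Such a term must be absorbed by $g$, not $f$: the paper takes $f$ to be the plain inclusion and sets
\[
g(\langle (i) \otimes (i|i-1|i) \rangle) = (i|i-1|i) \otimes \langle (i) \otimes (i) \rangle .
\]

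Your version, with $g$ the plain projection and $f$ modified on $\langle (i) \otimes (i) \rangle$, cannot satisfy the hypotheses of Lemma 2.12: one computes
\[
(\id + dT)\langle (i) \otimes (i|i-1|i) \rangle = (i|i-1|i) \otimes \langle (i) \otimes (i) \rangle,
\]
whereas $f \circ g$ annihilates $\langle (i) \otimes (i|i-1|i) \rangle$ whenever $g$ is the plain projection, no matter how $f$ is corrected. Moreover, a correction to $f$ on $\langle (i) \otimes (i) \rangle$ either breaks $g \circ f = \id_{Z'}$ (if supported on $Z'$) or breaks the chain-map condition for $f$ (if supported on the cancelled generators), since $\delta^1_1\langle (i) \otimes (i) \rangle = 0$ in $\R'_i$. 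The fix is simply to move the extra term to $g$ as above; the consequence for the zig-zag enumeration is that the additional $\delta^1_2$-arrows of Figure \ref{HtpyRestRPrimeiGraphFig} arise from arrows of $\Rest_{\phi}\R'_i$ landing on $\langle (i) \otimes (i|i-1|i) \rangle$ postcomposed with the corrected $g$, rather than from a corrected $f$.
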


\begin{proof}
The proof closely follows Lemma~\ref{HtpyLemma}. Rather than $Z$, we have a Type D structure $Z'$ obtained by discarding the generators $\langle (i) \otimes (i|i-1|i) \rangle$ and $\langle (i) \rangle$ from Figure \ref{RestRPrimeiGraphFig}. This time, the homomorphism $f$ from $Z'$ to $\Rest_{\phi} \R'_i$ is the inclusion, and the homomorphism $g$ from $\Rest_{\phi} \R'_i$ to $Z'$ is the usual projection on all generators except $\langle (i) \otimes (i|i-1|i) \rangle$. We have
\[
g(\langle (i) \otimes (i|i-1|i) \rangle) := (i|i-1|i) \otimes \langle (i) \otimes (i) \rangle.
\]
The map $T$ sends $\langle (i) \otimes (i|i-1|i) \rangle$ to $\langle (i) \rangle$ and sends all other generators to zero. The rest of the proof is the same as for Lemma~\ref{HtpyLemma}, except that $g$ (rather than $f$) has the ``extra'' term.
\end{proof}
Again, a similar statement holds when $i = m-1$.

Below, we will need to work with homomorphisms of DA bimodules explicitly. Let $X$ and $Y$ be DA bimodules over dg algebras $(\A,\A')$. We recall (see Definition 2.2.43 of \cite{LOTBimod}) that a homomorphism $f: X \to Y$ of DA bimodules consists of maps 
\[
f_j: X \otimes \A'_+[1]^{j-1} \to \A \otimes Y
\]
for $j \geq 1$, satisfying compatibility conditions (recall that $[\cdot]$ denotes upward shift in the homological grading). Here, $\A'_+$ denotes the augmentation ideal of $\A'$; the algebras considered in this paper have natural augmentations (the projections onto the idempotent rings which send all quiver generators to zero).

\begin{figure} \centering
\includegraphics[scale=0.625]{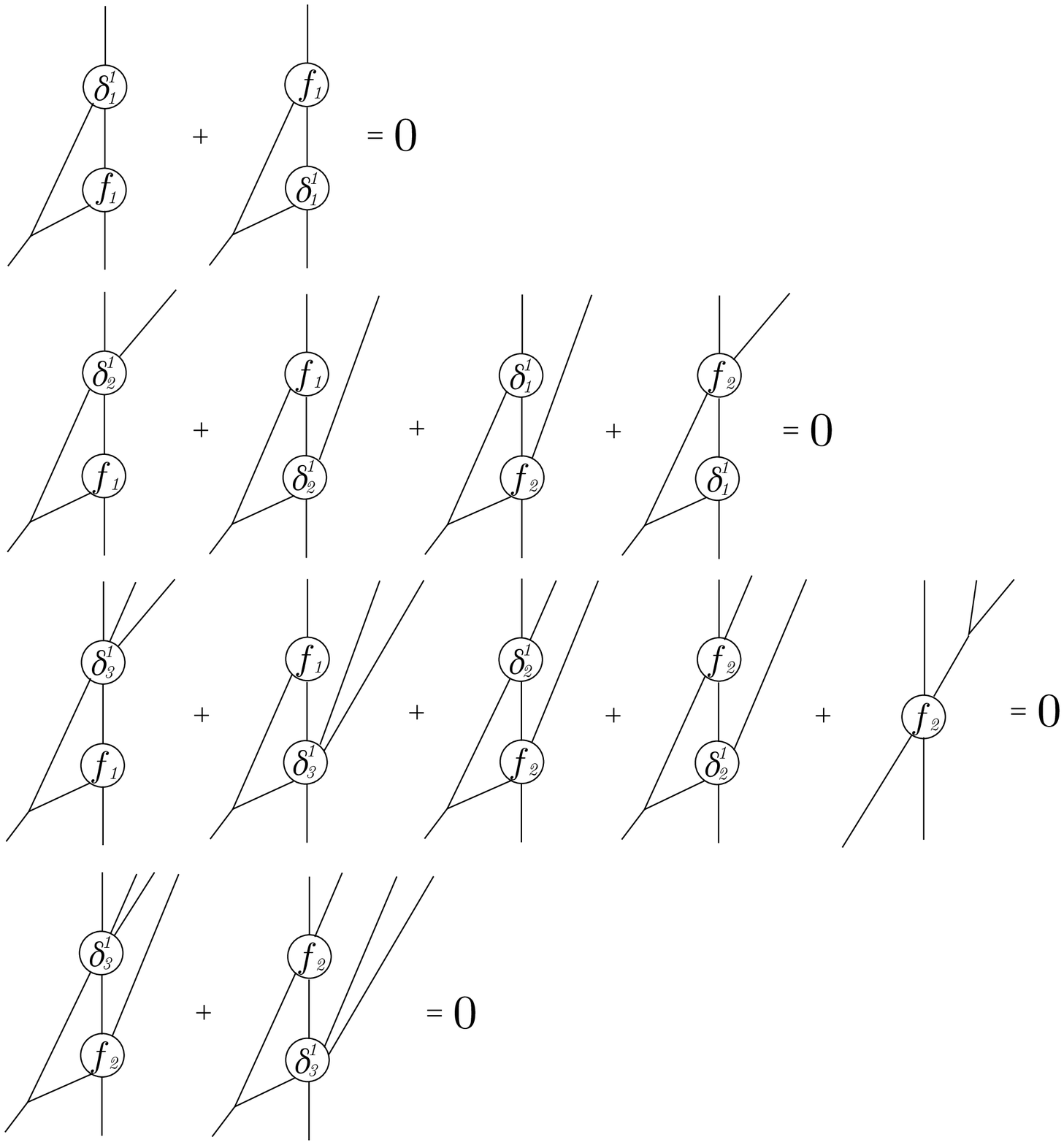}
\caption{DA bimodule homomorphism compatibility conditions, in the special case of Lemma~\ref{DAConditionsLemma}.}
\label{DAMorRelsFig}
\end{figure}

\begin{lemma}\label{DAConditionsLemma}
Suppose $\A$ and $\A'$ are dg algebras with zero differential, the DA bimodules $X$ and $Y$ have $\delta^1_j = 0$ for $j \geq 4$, and the morphism $f$ has $f_j = 0$ for $j \geq 3$. Then the DA compatibility conditions for $f$ (i.e. $\partial(f) = 0$ for the differential $\partial$ defined in Figure 2 of \cite{LOTBimod}) are equivalent to the conditions shown in Figure~\ref{DAMorRelsFig}. 
\end{lemma}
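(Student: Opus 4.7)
The plan is to start from the general identity $\partial(f) = 0$ as defined diagrammatically in Figure 2 of \cite{LOTBimod}, and then prune every tree-summand that must vanish under the stated hypotheses. For each $k \geq 1$, the $k$-input component $(\partial f)_k \colon X \otimes \A'_+[1]^{k-1} \to \A \otimes Y$ is a sum over trees with one $x$-leaf and $k-1$ $\A'$-leaves, one internal $f$-node, some internal $\delta^X$- and $\delta^Y$-nodes, and algebra nodes representing differentials or multiplications in $\A$ and $\A'$.

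First I would apply the assumption that the differentials on $\A$ and $\A'$ are zero, eliminating every tree containing a $\mu_1$-node on either algebra. Then the hypothesis $f_j = 0$ for $j \geq 3$ restricts the internal $f$-node to $f_1$ or $f_2$, and the hypothesis $\delta_j = 0$ for $j \geq 4$ restricts every $\delta^X$- or $\delta^Y$-node to have arity at most three (equivalently, at most two $\A'$- or $\A$-inputs, respectively). Since $\A$ and $\A'$ are honest dg algebras, their only surviving multiplication is the binary $\mu_2$.

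Next I would organize the remaining trees by the total number of $\A'$-inputs. Because $\partial$ preserves this count, the identity $(\partial f)_k = 0$ decouples for each $k$, so it suffices to list the admissible tree-shapes for each small value of $k$ and check that they correspond, term by term, to the pieces of Figure~\ref{DAMorRelsFig}. The arity bounds on $\delta_j$ and $f_j$, together with the vanishing of algebra differentials, guarantee that only finitely many tree-shapes survive, and the analysis is completely mechanical.

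The main obstacle is purely combinatorial bookkeeping: ensuring that no admissible tree is overlooked, and that the graphical conventions of Figure~\ref{DAMorRelsFig} (arrow orientations, which inputs flow into $f_1$ versus $f_2$, and whether $\mu_2$ is inserted in $\A$ or in $\A'$) are matched consistently with the terms produced by the expansion. No conceptual difficulty is expected, and since the whole story is over $\Z/2\Z$ there are no signs to track.
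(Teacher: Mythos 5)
Your proposal is correct and follows essentially the same route as the paper, which simply states that the lemma follows from unpacking the definitions of \cite{LOTBimod} (Definition 2.2.43 and Figure 2) in this special case; your tree-pruning bookkeeping is exactly that unpacking made explicit. No gap, and no substantive difference in approach.
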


\begin{proof} This follows from unpacking Lipshitz-Ozsv{\'a}th-Thurston's definitions in this special case. For an explanation of the notation of Figure~\ref{DAMorRelsFig}, see Definition 2.2.43 of \cite{LOTBimod}, which contains the Figure 2 referred to above.
\end{proof}

\begin{theorem}[cf. Theorem~\ref{IntroBimodThm}]\label{BodyBimodThmP}
$\Rest_{\phi} \R_i$ and ${^{\phi}}\Induct \Pc^i_l$ are homotopy equivalent as DA bimodules over $(A_{m-1} \otimes \Z/2\Z, \Ccl^{bot.gr.}(m,1,\emptyset))$.
\end{theorem}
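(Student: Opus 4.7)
The plan is to first replace $\Rest_{\phi} \R_i$ by the simpler DA bimodule of Figure~\ref{HtpyRestRiGraphFig}, which is legitimate by Lemma~\ref{HtpyLemma}. The problem then reduces to exhibiting a homotopy equivalence between that explicit small model and ${^{\phi}}\Induct \Pc^i_l$ (Figure~\ref{IndPiGraphFig}). I would first match generators via idempotents and the bottom grading $\epsilon$: the generators of Figure~\ref{HtpyRestRiGraphFig} should correspond to a preferred subset of the generators of ${^{\phi}}\Induct \Pc^i_l$, presumably those labeled with lowest power $k=0$ in the $N/W/E/S$ indexing inherited from $\Pc^i_l$. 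The generators of ${^{\phi}}\Induct \Pc^i_l$ with $k \geq 1$ should be cancellable using the defining relations of $\ker \phi$ from Theorem~\ref{BodyAlgThm}, namely $R_1 L_1$, $L_j R_j + R_{j+1} L_{j+1}$ for $1 \leq j \leq m-2$, and $L_{m-1} R_{m-1} + U_m$, which after induction along $\phi$ turn certain pairs of $\delta^1_1$ arrows into invertible components.

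Concretely, I would apply Ozsv{\'a}th-Szab{\'o}'s Lemma 2.12 iteratively to ${^{\phi}}\Induct \Pc^i_l$ to cancel all generators indexed by $k \geq 1$. Each $U_j^k$-weighted generator should pair with a neighbor of $\epsilon$-degree differing by $2$, and the $k=0$ piece should survive; the induced $\delta^1_3$ operations produced by Ozsv{\'a}th-Szab{\'o}'s zigzag patterns $x \to f(x) \xrightarrow{\alpha_1} \cdot \xrightarrow{T} \cdot \xrightarrow{\alpha_2} y \to g(y)$ should then match exactly the $\delta^1_3$ arrows of Figure~\ref{HtpyRestRiGraphFig}. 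If after this reduction the two DA bimodules coincide on the nose, we are done; otherwise the remaining discrepancy at the level of $\delta^1_2$ or $\delta^1_3$ should be absorbable by a DA bimodule homomorphism with nontrivial $f_2$ term, whose compatibility condition $\partial f = 0$ is a finite check using Lemma~\ref{DAConditionsLemma}. The inverse morphism is constructed symmetrically, and the two homotopies $f \circ g \sim \id$, $g \circ f \sim \id$ are again verified via Lemma~\ref{DAConditionsLemma}.

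The main obstacle I expect is the combinatorial bookkeeping for the cancellation of the infinite family of $k \geq 1$ generators. The telescoping sums of arrows labeled by powers $U_{\bullet}^k$ must collapse cleanly under $\phi$, and one needs to verify that no higher $\delta^1_j$ ($j \geq 4$) operations are spuriously generated by the iterated cancellation (the target bimodule has $\delta^1_j = 0$ for $j \geq 4$). A secondary issue is bookkeeping the refined bigrading throughout the reduction, to confirm that every arrow produced is consistent with the $\tau/\beta$-gradings of Definition~\ref{RefinedGradDef} and with the homomorphism $\tau_i \leftrightarrow \tau_{i+1}$, $\beta_i \leftrightarrow \beta_{i+1}$ on the input algebra.

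Finally, the edge case $i = m-1$ is handled by the same strategy applied to the truncated bimodules in Figures~\ref{SpecialRiGraphFig} and~\ref{SpecialPiGraphFig}: both lose the same subset of generators, and the cancellation argument goes through with only notational changes. The analogous theorem for $\R'_i$ and $\Nc^i_l$ (Theorem~\ref{BodyBimodThmN}) will then follow by the mirror-image argument, using Lemma~\ref{PrimeHtpyLemma} in place of Lemma~\ref{HtpyLemma} and ${^{\phi}}\Induct \Nc^i_l$ in place of ${^{\phi}}\Induct \Pc^i_l$.
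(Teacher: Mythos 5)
Your first step (replacing $\Rest_{\phi}\R_i$ by the small model of Figure~\ref{HtpyRestRiGraphFig} via Lemma~\ref{HtpyLemma}) matches the paper. The second half, however, rests on a misreading of Figure~\ref{PiGraphFig}: the parameter $k$ there does not index generators of $\Pc^i_l$. It indexes the powers of $U$ appearing in the \emph{labels} of the arrows (the algebra inputs and outputs of the $\delta^1_2$ and $\delta^1_3$ operations), which is where the infinite-dimensionality of $\Ccl(m,1,\emptyset)$ shows up. The generator set of $\Pc^i_l$, and hence of ${^{\phi}}\Induct \Pc^i_l$ (induction along $\phi$ only pushes the output labels forward through $\phi$; it does not change the set of generators), is finite and is already in bijection with the generators of the simplified model $Z$ of Figure~\ref{HtpyRestRiGraphFig}. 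Consequently there is no infinite family of ``$k\geq 1$ generators'' to cancel, your proposed iterated application of Ozsv{\'a}th--Szab{\'o}'s Lemma 2.12 to ${^{\phi}}\Induct \Pc^i_l$ has nothing to act on, and the mechanism you invoke --- kernel generators of $\phi$ such as $L_jR_j + R_{j+1}L_{j+1}$ ``turning pairs of $\delta^1_1$ arrows into invertible components'' --- does not occur: $\phi$ sends these elements to zero rather than producing idempotent-labeled (cancellable) arrows. The worry about spurious $\delta^1_j$ operations for $j\geq 4$ is likewise moot, since no cancellation on the Ozsv{\'a}th--Szab{\'o} side is performed.

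What actually remains to be done after Lemma~\ref{HtpyLemma} --- and what constitutes the substance of the theorem --- is the construction of an explicit isomorphism (not merely a homotopy equivalence) between $Z$ and ${^{\phi}}\Induct \Pc^i_l$. The obvious generator bijections $\iota$, $\iota'$ are not DA homomorphisms by themselves; one must add correction terms $h$, $h'$ with nontrivial degree-two components, e.g.\ $h_2(\langle (i-1)\rangle, U_i) = (i-1|i)\otimes W$, $h_2(\langle (i-1)\rangle, R_iU_i) = (i-1|i)\otimes N$, and similarly on $\langle (i+1)\rangle$ with inputs $U_{i+1}$, $L_{i+1}U_{i+1}$, together with the analogous $h'_2$ on ${_{(i-1)}}S^{(i-1)}$ and ${_{(i+1)}}S^{(i+1)}$. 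One then verifies the conditions of Lemma~\ref{DAConditionsLemma} arrow by arrow (the discrepancies between the $\delta^1_2$ and $\delta^1_3$ arrows of Figures~\ref{IndPiGraphFig} and~\ref{HtpyRestRiGraphFig} are absorbed by $h_2$ via factorizations such as $R_iU_i = R_i\cdot U_i = U_i\cdot R_i$), and checks that $\iota+h$ and $\iota'+h'$ are mutually inverse. You gesture at this possibility only as a fallback (``a homomorphism with nontrivial $f_2$ term''), without specifying the maps or the inputs on which they are supported, so the proposal as written does not contain the key content of the proof.
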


\begin{proof}
We will assume $i < m-1$ (the case $i = m-1$ is similar enough that we omit it for brevity). By Lemma~\ref{HtpyLemma}, it suffices to show that the DA bimodules shown in Figures \ref{IndPiGraphFig} and \ref{HtpyRestRiGraphFig} are homotopy equivalent; in fact, we will show they are isomorphic. Let $Z$ denote the DA bimodule of Figure~\ref{HtpyRestRiGraphFig}.

The shape of Figures \ref{IndPiGraphFig} and \ref{HtpyRestRiGraphFig} suggests natural maps $\iota$ from the generators of $Z$ to the generators of ${^{\phi}}\Induct \Pc^i_l$ and $\iota'$ in the other direction. We will define additional maps $h,h'$ such that 
\[
\iota + h: Z \to {^{\phi}}\Induct \Pc^i_l
\]
is an isomorphism of DA bimodules with inverse
\[
\iota' + h': {^{\phi}}\Induct \Pc^i_l \to Z.
\]

The map $h$ will only be nonzero on the generators $\langle (i-1) \rangle$ and $\langle (i+1) \rangle$ of $Z$. We define $h_1 = 0$ and
\begin{align*}
&h_2(\langle (i-1) \rangle, U_i) = (i-1|i)\otimes W;\\
&h_2(\langle (i-1) \rangle, R_iU_i) = (i-1|i) \otimes N;\\
&h_2(\langle (i+1) \rangle, U_{i+1}) = (i+1|i) \otimes E;\\
&h_2(\langle (i+1) \rangle, L_{i+1}U_{i+1}) = (i+1|i) \otimes N.\\
\end{align*}

$h'$ is only nonzero on the generators ${_{(i-1)}}S^{(i-1)}$ and ${_{(i+1)}}S^{(i+1)}$ of ${^{\phi}}\Induct \Pc^i_l$. We define $h'_1 = 0$ and 
\begin{align*}
&h'_2({_{(i-1)}}S^{(i-1)}, U_i) = (i-1|i) \otimes \langle (i) \otimes (i|i-1) \rangle;\\
&h'_2({_{(i-1)}}S^{(i-1)}, R_iU_i) = (i-1|i) \otimes \langle (i) \otimes (i|i-1|i) \rangle;\\ 
&h'_2({_{(i+1)}}S^{(i+1)}, U_{i+1}) = (i+1|i) \otimes \langle (i) \otimes (i|i+1) \rangle;\\
&h'_2({_{(i+1)}}S^{(i+1)}, L_{i+1}U_{i+1}) = (i+1|i) \otimes \langle (i) \otimes (i|i-1|i) \rangle.\\
\end{align*}
Note that $h$ and $h'$ have the desired properties with respect to the homological and intrinsic gradings (recall that ${^{\phi}}\Induct \Pc^i_l$ is given the ``bottom gradings'').

We first check that $\iota + h$ is a valid DA bimodule homomorphism using Lemma~\ref{DAConditionsLemma}. We have $(\iota + h)_1 = \iota$ and $(\iota + h)_2 = h_2$.

The relation on the first row of Figure~\ref{DAMorRelsFig} holds because the dotted arrows are the same in Figures \ref{IndPiGraphFig} and \ref{HtpyRestRiGraphFig}. The relation on the second row of Figure~\ref{DAMorRelsFig} can be checked by comparing the solid arrows representing $\delta^1_2$ actions in Figures \ref{IndPiGraphFig} and \ref{HtpyRestRiGraphFig}. The differences between these arrows in the two figures (there are $6$ such differences) are accounted for using $h_2$ and the dotted arrows.

The relation on the third row of Figure~\ref{DAMorRelsFig} can be checked by, first, comparing the solid arrows representing $\delta^1_3$ actions in Figures \ref{IndPiGraphFig} and \ref{HtpyRestRiGraphFig}. The differences (there are $4$ this time) are accounted for using terms of the rightmost type on row three of Figure~\ref{DAMorRelsFig} (``multiplication terms''). These terms arise because we can write the algebra inputs of $h_2$ and $h'_2$ as $U_i = R_i \cdot L_i$, $R_i U_i = R_i \cdot U_i = U_i \cdot R_i$, $U_{i+1} = L_{i+1} \cdot R_{i+1}$, and $L_{i+1}U_{i+1} = L_{i+1} \cdot U_{i+1} = U_{i+1} \cdot L_{i+1}$. 

All multiplication terms except those from $R_i U_i = U_i \cdot R_i$ and $L_{i+1} U_{i+1} = U_{i+1} \cdot L_{i+1}$ cancel with terms of the leftmost or second-leftmost type on row three of Figure~\ref{DAMorRelsFig}. The remaining two multiplication terms cancel with terms of the third- or second-rightmost type on this row. Inspection of the diagrams shows that these are all the nontrivial relations from the third row of Figure~\ref{DAMorRelsFig}.

Finally, the relation on the bottom row of Figure~\ref{DAMorRelsFig} holds because both of its terms are zero individually. Analogous reasoning shows that $\iota' + h'$ is also a DA bimodule homomorphism.

The composition of DA bimodule morphisms is defined, along with the differential, in Figure 2 of \cite{LOTBimod}. We may expand out the composition $(\iota' + h') \circ (\iota + h)$ as
\[
\id_Z + h' \circ \iota + \iota' \circ h + h' \circ h.
\]
Similarly, $(\iota + h) \circ (\iota' + h')$ is
\[
\id_{{^{\phi}}\Induct \Pc^i_l} + h \circ \iota' + \iota \circ h' + h \circ h'.
\]
One can check that $h' \circ \iota = \iota' \circ h$ and $h \circ \iota' = \iota \circ h'$, as well as $h' \circ h = 0$ and $h \circ h' = 0$. Thus, $\iota + h$ and $\iota' + h'$ are inverses.

\end{proof}

\begin{theorem}[cf. Theorem~\ref{IntroBimodThm}]\label{BodyBimodThmN}
$\Rest_{\phi} \R'_i$ and ${^{\phi}}\Induct \Nc^i_l$ are homotopy equivalent as DA bimodules over $(A_{m-1} \otimes \Z/2\Z, \Ccl^{bot.gr.}(m,1,\emptyset))$.
\end{theorem}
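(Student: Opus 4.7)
The proof plan is to follow the same strategy as Theorem~\ref{BodyBimodThmP}, using the negative-crossing analogues throughout. First, apply Lemma~\ref{PrimeHtpyLemma} to replace $\Rest_{\phi} \R'_i$ by the simpler DA bimodule $Z'$ shown in Figure~\ref{HtpyRestRPrimeiGraphFig}. It then suffices to exhibit an explicit isomorphism of DA bimodules between $Z'$ and ${^{\phi}}\Induct \Nc^i_l$, since Figures~\ref{HtpyRestRPrimeiGraphFig} and \ref{NegIndPiGraphFig} have the same underlying shape. As before, we assume $i < m-1$; the case $i = m-1$ is handled by deleting the appropriate generators and arrows, and requires no new ideas.

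Define $\iota: Z' \to {^{\phi}}\Induct \Nc^i_l$ and $\iota': {^{\phi}}\Induct \Nc^i_l \to Z'$ to be the evident shape-preserving bijections on generators (with $\iota_j = \iota'_j = 0$ for $j \geq 2$). These will not quite be DA bimodule homomorphisms on their own; the discrepancy comes from solid arrows in the two diagrams that disagree, and correcting this requires maps $h$ and $h'$ that are supported only on the ``northwest'' and ``northeast'' generators playing the role of $\langle (i-1) \rangle, \langle (i+1) \rangle$ in $Z'$ and of ${_{(i-1)}}S^{(i-1)}, {_{(i+1)}}S^{(i+1)}$ in ${^{\phi}}\Induct \Nc^i_l$. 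Since $\Nc^i_l$ is obtained from $\Pc^i_l$ by swapping $R_j \leftrightarrow L_j$ and reversing arrows, the correction terms here should take essentially the same form as $h$ and $h'$ in Theorem~\ref{BodyBimodThmP}, but with each $R_i U_i$ replaced by $L_i U_i$ and each $L_{i+1} U_{i+1}$ replaced by $R_{i+1} U_{i+1}$, and with source and target generators interchanged appropriately to match the reversed arrow directions in the negative-crossing diagrams. Both $h$ and $h'$ will again have only $h_2, h'_2$ nonzero.

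To verify that $\iota + h$ and $\iota' + h'$ are DA bimodule homomorphisms, we apply Lemma~\ref{DAConditionsLemma}. The first row of Figure~\ref{DAMorRelsFig} holds because $\iota$ and $\iota'$ match the dotted ($\delta^1_1$) arrows on the nose. The second row is satisfied because the differences between $\delta^1_2$ solid arrows in Figures~\ref{HtpyRestRPrimeiGraphFig} and \ref{NegIndPiGraphFig} are exactly accounted for by $h_2, h'_2$ post-composed or pre-composed with dotted arrows. For the third row, the discrepancies between $\delta^1_3$ arrows in the two figures are matched by multiplication terms coming from the two ways to factor $U_i = R_i L_i = L_i R_i$ (and similarly for $U_{i+1}$), exactly as in the positive case; the only bookkeeping change is that the order of the two algebra inputs in the $\delta^1_3$ arrows is reversed for $\Nc^i_l$, which is what makes the correction terms still fit. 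The bottom row of Figure~\ref{DAMorRelsFig} vanishes termwise. Finally, one checks that $(\iota' + h') \circ (\iota + h) = \id_{Z'}$ and $(\iota + h) \circ (\iota' + h') = \id_{{^{\phi}}\Induct \Nc^i_l}$ by verifying $h' \circ \iota = \iota' \circ h$, $h \circ \iota' = \iota \circ h'$, and $h \circ h' = h' \circ h = 0$, each of which follows from direct inspection since $h, h'$ are supported on disjoint families of generators.

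The main obstacle is purely bookkeeping: one must carefully track the reversed arrow directions and the reversed order of $\delta^1_3$ inputs in the negative-crossing diagrams when writing down $h$ and $h'$, to make sure that the factorizations of $U_i$ and $U_{i+1}$ align with the correct multiplication terms in the third row of Figure~\ref{DAMorRelsFig}. Once the correct choices are made, the verification is a direct mirror of the argument for Theorem~\ref{BodyBimodThmP}.
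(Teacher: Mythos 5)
Your proposal follows the paper's proof essentially verbatim in strategy: reduce via Lemma~\ref{PrimeHtpyLemma}, then build an explicit isomorphism $\iota + h$ with inverse $\iota' + h'$ between the bimodule of Figure~\ref{HtpyRestRPrimeiGraphFig} and ${^{\phi}}\Induct \Nc^i_l$, verifying the conditions of Lemma~\ref{DAConditionsLemma} row by row. One concrete slip: you assert that $h$ and $h'$ are supported on the generators playing the role of $\langle (i-1) \rangle, \langle (i+1) \rangle$ and of ${_{(i-1)}}S^{(i-1)}, {_{(i+1)}}S^{(i+1)}$, but this contradicts your own (correct) prescription that sources and targets must be interchanged relative to the positive case. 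Since the arrows are reversed in the negative-crossing diagrams, the discrepant operations now point \emph{into} the $S$-type and idempotent generators, so the corrections must emanate from the $W$, $N$, $E$ generators: in the paper, $h$ is supported on the three generators $\langle (i) \otimes (i|i-1) \rangle$, $\langle (i) \otimes (i) \rangle$, $\langle (i) \otimes (i|i+1) \rangle$ with outputs landing on ${_{(i\pm1)}}S^{(i\pm1)}$, and $h'$ is supported on $W$, $N$, $E$ with outputs landing on $\langle (i \pm 1) \rangle$ (note the support is three generators, not two, since $N$ carries two correction terms). With that correction, the algebra inputs $U_i$, $L_iU_i$, $U_{i+1}$, $R_{i+1}U_{i+1}$ and the rest of the verification are exactly as you describe.
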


\begin{proof}
As with Lemma~\ref{PrimeHtpyLemma}, we will only consider the generic case $1 \leq i < m-1$ for simplicity, and we only outline the differences with the proof of Theorem \ref{BodyBimodThmP}. Now the map $h$ is defined by $h_1 = 0$ and 
\begin{align*}
&h_2(\langle (i) \otimes (i|i-1) \rangle, U_i) = (i|i-1) \otimes {_{(i-1)}}S^{(i-1)} \\
&h_2(\langle (i) \otimes (i) \rangle, L_iU_i) = (i|i-1) \otimes {_{(i-1)}}S^{(i-1)} \\
&h_2(\langle (i) \otimes (i|i+1) \rangle, U_{i+1}) = (i|i+1) \otimes {_{(i+1)}}S^{(i+1)} \\
&h_2(\langle (i) \otimes (i) \rangle, R_{i+1}U_{i+1}) = (i|i+1) \otimes {_{(i+1)}}S^{(i+1)}. \\
\end{align*}
The map $h'$ is defined by $h'_1 = 0$ and
\begin{align*}
&h'_2(W, U_i) = (i|i-1) \otimes \langle (i-1) \rangle \\
&h'_2(N, L_iU_i) = (i|i-1) \otimes \langle (i-1) \rangle \\
&h'_2(E, U_{i+1}) = (i|i+1) \otimes \langle (i+1) \rangle \\
&h'_2(N, R_{i+1}U_{i+1}) = (i|i+1) \otimes \langle (i+1) \rangle. \\
\end{align*}
As above, the DA homomorphism consistency conditions of Lemma \ref{DAConditionsLemma} are checked using Figure \ref{NegIndPiGraphFig} and Figure \ref{HtpyRestRPrimeiGraphFig}. 
\end{proof}

\bibliographystyle{alpha}
\bibliography{biblio}

\end{document}